\providecommand{\U}[1]{\protect\rule{.1in}{.1in}}
\newtheorem{theorem}{Theorem}[section]
\newtheorem{proposition}[theorem]{Proposition}
\newtheorem{corollary}[theorem]{Corollary}
\newtheorem{remark}[theorem]{Remark}
\newtheorem{final remark}[theorem]{Final Remark}
\newtheorem{definition}[theorem]{Definition}
\begin{document}

\title{Mid summable sequences: an anisotropic approach}
\author{Jamilson R. Campos and Joedson Santos\thanks{%
Supported by CNPq Grant 303122/2015-3\thinspace \hfill\newline
\indent 2010 Mathematics Subject Classification: 46B45, 47B10, 47L20.\newline
\indent Key words: Banach sequence spaces, operator ideals, summing
operators.}}
\date{}
\maketitle

\begin{abstract}
The notion of mid $p$-summable sequences was introduced by Karn and Sinha in 2014 and recently explored and expanded by Botelho, Campos and Santos in 2017. In this paper we design a theory of mid summable sequences in the anisotropic setting defining a new more general space called space of mid $(q,p)$-summable sequences. As a particular case of our results, we prove an inclusion relation between spaces of  mid summable sequences. We also define classes of operators that deals with this new space, the mid $(q,p)$-summing operators, and prove some important results on these classes as inclusion and coincidence theorems and a Pietsch Domination-type theorem. It is worth to mentioning that these abovementioned results are new even in the particular case of the mid $p$-summable environment.
\end{abstract}

\section{Introduction}

\label{introd}

Classes of operators that improve the convergence of series, which are usually defined or characterized by the transformation of vector-valued sequences, are frequently studied in the theory of ideals of linear operators between Banach spaces. Certainly, the most celebrated of such classes is the ideal of absolutely $p$-summing linear operators: a linear operator $T:E \rightarrow F$ is absolutely $p$-summing if sends weakly $p$-summable sequences to absolutely $p$-summable sequences. We refer to the excellent monograph \cite{djt} for an in-depth study on this subject (see also \cite{df,pietschlivro}).

A new kind of summability in Banach spaces was introduced in 2014 by Karn and Sinha in their work \cite{karnsinha}, from a generalization of the notion of limited sets (in the sense of Gelfand-Phillips, see \cite{gelfand, phillips}) to the ``$p$-sense''. This concept led them to a new space of $E$-valued sequences, denoted $\ell_p^{\text{mid}}(E)$, such that
\begin{equation}\label{inc}
\ell_p(E) \subseteq \ell_p^{\text{mid}}(E) \subseteq \ell_p^{w}(E),
\end{equation}
where $E$ is a Banach space and $\ell_p(E)$ and $\ell_p^w(E)$ denote the spaces of absolutely and weakly $p$-summable $E$-valued sequences, respectively. They did a rudimentary study of the new space, so called space of mid $p$-summable sequences, providing some of its characteristics and studying a new class of linear operators related to this space.

In 2017, the paper of Karn and Sinha was revisited by Botelho \textit{et al.} \cite{botelhocampossantos} and much more results/features of the space of mid summable sequences was developed, including a more appropriated notation, a complete norm for $\ell_p^{\text{mid}}(E)$ and a more deep study of classes of operators that dealing with this space.

In its most appropriate notation for our purposes, the space $\ell_p^{\text{mid}}(E)$ can be defined as follows: If $p\in \lbrack 1,\infty )$, a sequence 
$(x_{j})_{j=1}^{\infty }$ in $E$ is called mid $p$-summable when
\begin{equation}
((x_{n}^{\ast }(x_{j}))_{j=1}^{\infty })_{n=1}^{\infty }\in \ell _{p}(\ell
_{p}),\ \mathrm{\ for\ all\ }(x_{n}^{\ast })_{n=1}^{\infty }\in \ell
_{p}^{w}(E^{\ast }).  \label{mid}
\end{equation}%
When equipped with the norm
\begin{equation*}
\Vert (x_{j})_{j=1}^{\infty }\Vert _{\text{mid},p}:=\sup_{(x_{n}^{\ast
})_{n=1}^{\infty }\in B_{\ell _{p}^{w}(E^{\ast })}}\left( \sum_{n=1}^{\infty
}\sum_{j=1}^{\infty }|x_{n}^{\ast }(x_{j})|^{p}\right) ^{1/p},
\end{equation*}%
$\ell _{p}^{\text{mid}}(E)$ is a Banach space.

In this paper we extend this notion to the anisotropic setting. More precisely, we define a family of more general sequence spaces, called mid  $(q,p)$-summable sequence spaces, denoted by $\ell_{q,p}^{\text{mid}}(E)$, that encompasses the space $\ell_{p}^{\text{mid}}(E)$ as a particular instance and also investigate classes of operators related to mid $(q,p)$-summable sequence spaces, as a kind of generalization of absolutely summing operators.  A lot of new results are presented such as those of inclusion and coincidence and a Pietsch-type Domination Theorem that are a novelty even in the case of mid summable sequences and operators.   

\section{Background}

We will provide the notations, definitions and results needed for our study. The letters $E,F$ shall denote Banach spaces over $\mathbb{K} = \mathbb{R}$
or $\mathbb{C}$. The closed unit ball of $E$ is denoted by $B_E$ and its
topological dual by $E^*$. The symbol $E \overset{1}{\hookrightarrow} F$
means that $E$ is a linear subspace of $F$ and $\|x\|_F \leq \|x\|_E$, for
every $x \in E$, and $E \overset{1}{=} F$ means that $E$ and $F$ are isomorphic isometrically. By $\mathcal{L}(E;F)$ we denote the Banach space of all
continuous linear operators $T \colon E \longrightarrow F$ endowed with the
usual sup norm. If $1 \le p \le \infty$ we denote by $p^*$ the conjugate of $%
p$, i.e. $1=\frac{1}{p}+\frac{1}{p^*}$. 

For a Banach space
$E$, the spaces
\[
\ell_{p}^{w}(E):=\left\{  (x_{j})_{j=1}^{\infty}\subset E:\left\Vert
(x_{j})_{j=1}^{\infty}\right\Vert _{w,p}:=\sup_{\varphi\in B_{E^{\ast}}
}\left(\sum_{j=1}^{\infty}
\left\vert \varphi(x_{j})\right\vert ^{p}\right)  ^{1/p}<\infty\right\}
\]
and
\[
\ell_{p}(E):=\left\{  (x_{j})_{j=1}^{\infty}\subset E:\left\Vert (x_{j}%
)_{j=1}^{\infty}\right\Vert _{p}:=\left(\sum_{j=1}^{\infty}
\left\Vert x_{j}\right\Vert ^{p}\right)  ^{1/p}<\infty\right\}
\]
are, respectively, the spaces of weakly and absolutely $p$-summable $E$-valued sequences. If $1\leq p\leq q<\infty,$ we say that a continuous linear operator between Banach spaces
$T:E\rightarrow F$ is $(q,p)$-summing if $\left(  T(x_{j})\right)
_{j=1}^{\infty}\in\ell_{q}(F)$ whenever $(x_{j})_{j=1}^{\infty}\in\ell
_{p}^{w}(E)$. By $\Pi_{q,p}$ we denote the ideal of absolutely $(q,p)$-summing linear operators \cite{df,djt}. If $p=q$ we simply write $\Pi_{p}$. 

An equivalent definition of absolutely summing operators asserts that $T:E\rightarrow F$ is $(q,p)$-summing
if there is a constant $C>0$ such that \[\left\Vert (T(x_{j}))_{j=1}^k\right\Vert_q \leq C\left\Vert
(x_{j})_{j=1}^{k}\right\Vert _{w,p}, \text{ for all } k \in \mathbb{N}\text{ and } x_{1},...,x_{k}\in E.\] This is also equivalent to taking $k=\infty$, whenever  $(x_{j})_{j=1}^{\infty}\in\ell_{p}^{w}(E)$. An integral characterization of absolutely summing operators was given by A. Pietsch, in his classical paper \cite{stu}: a linear operator $T$ is $p$-summing, $0<p<\infty$, if and only if there are a Borel probability measure $\mu$ on $B_{E^{\ast}}$ (with the
weak-star topology) and a constant $C>0$ such that%
\begin{equation}
\left\Vert T(x)\right\Vert \leq C\left(  \int_{B_{E^{\ast}}}\left\vert
\varphi(x)\right\vert ^{p}d\mu\right)  ^{\frac{1}{p}}, \text{ for all }x \in E. \label{p111}%
\end{equation}

We say that a normed space $E$ has cotype $q\geq2$ if there is a constant
$K\geq0$ so that, for all positive integer $n$ and all $x_{1},...,x_{n}$ in
$E$, we have%
\begin{equation}
\left( \sum_{i=1}^n\left\Vert x_{i}\right\Vert ^{q}\right)  ^{1/q}\leq K\left(  \int_{0}\left\Vert \sum_{i=1}^n r_{i}\left(  t\right)  x_{i}\right\Vert ^{2}dt\right)  ^{1/2}\text{,}
\label{2.3}
\end{equation}
where $r_{i}$'s are the Rademacher functions.

In the Section \ref{operatorsec} we will study the operator ideals determined by the transformation of vector-valued sequences belonging to the known sequence spaces and our new space. A usual approach, proving all the desired properties using the definitions of the involved operators and underlying sequence spaces, would lead to long and boring proofs. Alternatively, we shall apply the abstract framework constructed in \cite{botelhocampos} that generates this king of ideals.  So, let us present some required definitions of this environment with which we close this section.

A sequence class $X$ is a rule that assigns to each $E$ a Banach space $X(E)$ of $E$-valued sequences, that is $X(E)$ is a vector subspace of $E^{\mathbb{N}}$ with the coordinatewise operations, such that
$$c_{00}(E) \subseteq X(E) \stackrel{1}{\hookrightarrow}  \ell_\infty(E){\rm ~ and~ } \|e_j\|_{X(\mathbb{K})}= 1, {\rm ~for~ every~} j.$$
It is customary to denote a sequence class $X$ by the symbol $X(\cdot)$. A sequence class $X$ is called finitely determined if for every sequence $(x_j)_{j=1}^\infty \in E^{\mathbb{N}}$, $(x_j)_{j=1}^\infty \in X(E)$ if and only if $\displaystyle\sup_k \left\|(x_j)_{j=1}^k  \right\|_{X(E)} < +\infty$ and, in this case,
$$\left\|(x_j)_{j=1}^\infty  \right\|_{X(E)} = \sup_k \left\|(x_j)_{j=1}^k  \right\|_{X(E)}. $$

Classes of linear operators that improve convergence of series fit into the sequence classes environment as follows. A linear operator $T \in {\cal L}(E;F)$ is called $(X;Y)$-summing if $(T(x))_{j=1}^\infty \in Y(F)$ whenever $(x_j)_{j=1}^\infty \in X(E)$. In this case we write $T \in {\cal L}_{X;Y}(E;F)$ and define
$$\|T\|_{X;Y} = \|\widehat{T}\|_{{\cal L}(X(E); Y(F))},$$ 
where $\widehat{T}$ is the linear operator induced by $T$, given by $\widehat{T}((x_j)_{j=1}^\infty ) = (T(x))_{j=1}^\infty$.

Finally, we say that a sequence classe $X$ linearly stable if  ${\cal L}_{X;X}(E;F) \stackrel{1}{=} {\cal L}(E;F)$, for all Banach spaces $E$ and $F$, that is, for every $T \in {\cal L}(E;F)$, $(T(x_j))_{j=1}^\infty \in X(F)$ whenever $(x_j)_{j=1}^\infty \in X(E)$ and $\|\widehat{T}\colon X(E) \longrightarrow X(F)\| = \|T\|  $.

\section{Anisotropic mid summable sequences}

We start this section with a fact that motivates one of our main definitions. In that follows,  $1 \le p,q < \infty$ are real numbers and $E$ a Banach space. \medskip

The space $\ell_{p}^w(E^*)$ can be identified with ${\cal L}(E;\ell_{p})$, associating  $x^*=(x_{n}^*)_{n=1}^\infty \in \ell_{p}^w(E^*)$ with the operator $T_{x^*} \in {\cal L}(E;\ell_{p})$ given by $T_{x^*}(a) = (x_n^*(a))_{n=1}^\infty$ (see \cite[Section 8.2]{df}). Thus, for any sequence $(x_{j})_{j=1}^\infty \in E^{\mathbb{N}}$, we have $$T_{x^*}(x_j) = (x^*_{n}(x_{j}))_{n=1}^\infty \in \ell_p,\ \text{for all } j \in \mathbb{N},\ \text{whenever } x^*=(x_{n}^*)_{n=1}^\infty \in \ell_{p}^w(E^*).$$ 

The question that arises is: what happens when we take both indexes into account? In other words, what happens with the sequence $(T_{x^*}(x_j))_{j=1}^\infty = ((x^*_{n}(x_{j}))_{n=1}^\infty)_{j=1}^\infty$? This question lead us to the following definition:

\begin{definition}\label{esp-novo}  \rm
A sequence $(x_{j})_{j=1}^\infty \in E^{%
\mathbb{N}}$ is \textit{mid $(q,p)$-summable} if
\begin{equation}  \label{seqdef}
((x^*_{n}(x_{j}))_{n=1}^\infty)_{j=1}^\infty \in \ell_{q}(\ell_{p}),\ \
\mathrm{whenever\ \ } (x^*_{n})_{n=1}^\infty \in \ell_p^w(E^{*}),
\end{equation}
that is, 
\begin{equation}  \label{desigdef}
\sum_{j=1}^\infty \left(\sum_{n=1}^\infty |x^*_{n}(x_{j})|^{p}\right)^%
{q/p} < \infty,\ \ \mathrm{whenever\ \ } (x^*_{n})_{n=1}^\infty \in
\ell_p^w(E^{*}).
\end{equation}
\end{definition}

By the motivation above, the Definition \ref{esp-novo} is also equivalent to:\\ 

\noindent $\bullet$ A sequence $(x_{j})_{j=1}^\infty \in E^{\mathbb{N}}$ is mid $(q,p)$%
-summable if 
\begin{equation}\label{opdef}
(T(x_{j}))_{j=1}^{\infty }\in \ell_q(\ell_p),\
\mathrm{for\ all}\ T\in \mathcal{L}(E;\ell_p).
\end{equation}

\begin{remark}\rm
An attentive reader must have noticed the change of the order of indexes in the expressions \eqref{mid} and \eqref{seqdef}. Clearly, the indexes in \eqref{mid} can be interchanged and this is commonly done in the papers \cite{botelhocampossantos} and \cite{karnsinha} as a technique to obtain important results. So, this fact and our discussion at the beginning of this section naturally suggests the order of indexes in \eqref{seqdef} for our new definition. 
\end{remark}

We denote the set of the mid $(q,p)$-summable $E$-valued sequences by $\ell _{q,p}^{\text{mid}}(E)$. Of course, if $q=p$ we recover the space $\ell_{p}^{\text{mid}}(E)$.

\medskip
The expression
\begin{equation}
\Vert (x_{j})_{j=1}^{\infty }\Vert _{q,p}:=\sup_{(x_{n}^{\ast
})_{n=1}^{\infty }\in B_{\ell _{p}^{w}(E^{\ast })}}\left( \sum_{j=1}^{\infty
}\left( \sum_{n=1}^{\infty }|x_{n}^{\ast }(x_{j})|^{p}\right) ^{{q}/{%
p}}\right) ^{1/{q}}  \label{norma}
\end{equation}%
defines a norm on the space $\ell _{q,p}^{\text{mid}}(E)$. Indeed, the finiteness
of (\ref{norma}) can be proved using  that $\ell _{p}(E),\ell _{p}^{w}(E)%
\overset{1}{\hookrightarrow }\ell _{\infty }(E)$, for all $p$ and
for any Banach space $E$, and the Closed Graph Theorem. The other properties
of a norm are easily verified.
\medskip

For all Banach space $E$ and all $p$, it is immediate that \[\ell_{q,p}^{\text{mid}}(E) \overset{1}{\hookrightarrow} \ell_{r,p}^{\text{mid}}(E),\ \text{if } q \le r,\] and, similar to that found in \cite[Proposition 1.4]{botelhocampossantos}, the space $%
\ell_{q,p}^{\text{mid}}(E)$ can be placed in a chain with the spaces of
absolutely and weakly summable sequences. It is what shows the following
result.

\begin{proposition}\label{propchain}
The following chain is verified, for all $q$ and $p$:
\begin{equation}\label{chain}
\ell_{q}(E) \overset{1}{\hookrightarrow} \ell_{q,p}^{\mathrm{mid}}(E) \overset{1}{\hookrightarrow}
\ell_{q}^w(E)
\end{equation}
\end{proposition}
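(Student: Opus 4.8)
The plan is to establish the two continuous inclusions in \eqref{chain} separately, and for each one to verify simultaneously the set-theoretic containment and the norm inequality encoded in the symbol $\overset{1}{\hookrightarrow}$. Everything will be read off directly from the defining expression \eqref{norma} of $\|\cdot\|_{q,p}$, by choosing appropriate test objects inside the two supremum computations; no further appeal to the Closed Graph Theorem is needed beyond what already guarantees that \eqref{norma} is finite.

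For the left inclusion $\ell_q(E)\overset{1}{\hookrightarrow}\ell_{q,p}^{\mathrm{mid}}(E)$, I would fix $(x_j)_{j=1}^\infty\in\ell_q(E)$ and an arbitrary $(x_n^*)_{n=1}^\infty\in B_{\ell_p^w(E^*)}$, and estimate, for each index $j$, the inner $p$-sum $\sum_{n=1}^\infty|x_n^*(x_j)|^p$. The crucial observation is that the weak $p$-norm of $(x_n^*)_n$ in $E^*$ is measured against the unit ball of the bidual $E^{**}$, and the canonical isometric embedding $E\hookrightarrow E^{**}$ carries $x_j/\|x_j\|$ (for $x_j\neq 0$; the case $x_j=0$ being trivial) into $B_{E^{**}}$. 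Testing the bound $\|(x_n^*)_n\|_{w,p}\le 1$ against this element yields $\left(\sum_{n=1}^\infty|x_n^*(x_j)|^p\right)^{1/p}\le\|x_j\|$ for every $j$. Raising to the power $q$, summing over $j$, and taking the supremum over $(x_n^*)_n\in B_{\ell_p^w(E^*)}$ then gives $\|(x_j)_j\|_{q,p}\le\|(x_j)_j\|_q$, which is precisely the required containment together with the norm comparison.

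For the right inclusion $\ell_{q,p}^{\mathrm{mid}}(E)\overset{1}{\hookrightarrow}\ell_q^w(E)$, I would take $(x_j)_{j=1}^\infty\in\ell_{q,p}^{\mathrm{mid}}(E)$ and exploit the supremum in \eqref{norma} by feeding it a degenerate sequence of functionals. Concretely, for a fixed $\varphi\in B_{E^*}$ the one-term sequence $x^*=(\varphi,0,0,\dots)$ satisfies $\|x^*\|_{w,p}=\|\varphi\|\le 1$, so $x^*\in B_{\ell_p^w(E^*)}$; for this choice the inner $p$-sum collapses to $|\varphi(x_j)|^p$ and the whole double sum reduces to $\sum_{j=1}^\infty|\varphi(x_j)|^q$. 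Hence $\left(\sum_{j=1}^\infty|\varphi(x_j)|^q\right)^{1/q}\le\|(x_j)_j\|_{q,p}$, and taking the supremum over $\varphi\in B_{E^*}$ delivers both the membership in $\ell_q^w(E)$ and the inequality $\|(x_j)_j\|_{w,q}\le\|(x_j)_j\|_{q,p}$.

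I do not anticipate a genuine obstacle: the argument rests entirely on selecting the right test elements in the two suprema. The only point demanding care is the identification of the dual against which $\|\cdot\|_{w,p}$ on $E^*$ is computed; in the first inclusion one must recall that this supremum runs over $B_{E^{**}}$ and use that the canonical image of $B_E$ sits isometrically inside $B_{E^{**}}$, whereas in the second inclusion the equality $\|(\varphi,0,\dots)\|_{w,p}=\|\varphi\|$ is immediate. If preferred, the right inclusion could equally be phrased through the operator description \eqref{opdef}, but the direct manipulation of \eqref{norma} above seems the shortest route.
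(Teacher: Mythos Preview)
Your proposal is correct and follows essentially the same approach as the paper's own proof: for the right inclusion you use the same degenerate test sequence $(\varphi,0,0,\ldots)$, and for the left inclusion you normalize each nonzero $x_j$ and use that $B_E$ embeds into $B_{E^{**}}$ to bound the inner $p$-sum by $\|x_j\|$, exactly as the paper does (the paper phrases this as ``$B_E$ is a norming subset of $E^{**}$'', which is marginally stronger than what you invoke but leads to the same estimate).
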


\begin{proof}	 
For every $x^* \in B_{E^{*}}$, it is clear that $(x^*,0,0,\ldots) \in B_{\ell_p^w(E^{*})}$, so $\|\cdot \|_{q,w} \leq \|\cdot\|_{q,p}$ in $\ell_{q,p}^{\text{mid}}(E)$, for all $q$ and $p$.  Given $(x_{j})_{j=1}^\infty \in \ell_{q}(E)$ and $(x_n^*)_{n=1}^\infty \in \ell_{p}(E^{*})$. Using that $B_E$, regarded as a subspace of $E^{**}$, is a norming subset of $E^{**}$, we have 
\begin{equation}\label{truquenorma}
\|(x_n^{*})_{n=1}^\infty\|_{p,w}^{p} = \sup\limits_{x \in B_E} \sum\limits_{n=1}^\infty |x_n^*(x)|^{p}.
\end{equation}
 So putting $J = \{j \in \mathbb{N} : x_j \neq 0\}$ we have
\begin{align*}
\left(\sum_{j=1}^\infty \left(\sum_{n=1}^\infty |x_{n}^*(x_{j})|^{p}\right)^{{q}/{p}}\right)^{1/{q}} &  = \left(\sum_{j \in J}\|x_j\|^{q} \cdot \left( \sum_{n=1}^\infty \left|x_n^*\left(\frac{x_j}{\|x_j\|}\right)\right|^{p}\right)^{q/p}\right)^{1/q} \\
& \leq \| (x_{j})_{j=1}^\infty\| _{q} \| (x_{n}^*)_{n=1}^\infty\|_{p,w},
\end{align*}
from which $\|\cdot\|_{q,p} \le \|\cdot\|_{q}$ in $\ell_{q}(E)$.
\end{proof}

\begin{remark}\label{inc00}\rm
As $c_{00}(E) \subseteq \ell_q(E)$ and $\ell_{q}^w(E) \overset{1}{\hookrightarrow} \ell_\infty(E)$, using the chain in \eqref{chain} we obtain that $$c_{00}(E) \subseteq  \ell_{q,p}^{\text{mid}}(E) \overset{1}{\hookrightarrow} \ell_\infty(E),\ \text{for all }q\ \text{and }p.$$ 
\end{remark}

Once known the place of the space $\ell_{q,p}^{\text{mid}}(E)$ between the
spaces of absolutely and weakly summable sequences, it becomes a natural
question to know the conditions for which there is a coincidence with these
spaces. The following proposition establishes these conditions and its proof, which will be omitted, stems from adaptations in the demonstration of the facts from \cite[Proposition 3.1 and Theorem 4.5]{karnsinha}. The reciprocal of the sentence (ii) below is valid if $q=p$.

\begin{proposition}
Let $E$ be a Banach space and $1 \leq p < \infty$. Then:
\begin{description}
\item[(i)] $\ell_{q,p}^{\mathrm{mid}}(E) = \ell_{q}^w(E)$ if and only if  $\Pi_{q}(E;\ell_{p}) = \mathcal{L}(E;\ell_{p})$.
\item[(ii)] If $q \le p$ and $\ell_{q,p}^{\mathrm{mid}}(E) = \ell_{q}(E)$ then $E$ is a subspace of $L_q(\mu)$, for some Borel measure $\mu$.
\end{description}
\end{proposition}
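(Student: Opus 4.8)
The plan is to prove the two items separately, in both cases reducing the coincidence of sequence spaces to a concrete inequality by means of the Closed Graph Theorem, and exploiting throughout the operator reformulation \eqref{opdef} together with the isometric identification $\mathcal{L}(E;\ell_p)\overset{1}{=}\ell_p^w(E^*)$ recalled just before Definition \ref{esp-novo}. For item (i) I would argue by double inclusion, the inclusion $\ell_{q,p}^{\mathrm{mid}}(E)\overset{1}{\hookrightarrow}\ell_q^w(E)$ being already furnished by the chain \eqref{chain}. Assuming $\Pi_q(E;\ell_p)=\mathcal{L}(E;\ell_p)$, I would take $(x_j)_{j=1}^\infty\in\ell_q^w(E)$ and an arbitrary $T\in\mathcal{L}(E;\ell_p)$; since $T$ is $q$-summing it sends $\ell_q^w(E)$ into $\ell_q(\ell_p)$, so $(T(x_j))_{j=1}^\infty\in\ell_q(\ell_p)$ and \eqref{opdef} yields $(x_j)_{j=1}^\infty\in\ell_{q,p}^{\mathrm{mid}}(E)$, giving the reverse inclusion. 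Conversely, if the two spaces coincide, then for every $T\in\mathcal{L}(E;\ell_p)$ and every $(x_j)_{j=1}^\infty\in\ell_q^w(E)=\ell_{q,p}^{\mathrm{mid}}(E)$, the reformulation \eqref{opdef} forces $(T(x_j))_{j=1}^\infty\in\ell_q(\ell_p)$, i.e. $T$ is $q$-summing, whence $\mathcal{L}(E;\ell_p)=\Pi_q(E;\ell_p)$; the uniform constant realizing the equivalence of norms is supplied by the Closed Graph Theorem applied to the (always continuous) inclusion between the two Banach sequence spaces.

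For item (ii) the starting point is again the Closed Graph argument: since $\ell_q(E)\overset{1}{\hookrightarrow}\ell_{q,p}^{\mathrm{mid}}(E)$ always holds by \eqref{chain}, the coincidence $\ell_{q,p}^{\mathrm{mid}}(E)=\ell_q(E)$ is equivalent to the existence of $C>0$ with $\|(x_j)\|_q\le C\,\|(x_j)\|_{q,p}$ for all finite families. Rewriting the right-hand side through $\ell_p^w(E^*)\overset{1}{=}\mathcal L(E;\ell_p)$, this reads
\[
\sum_j\|x_j\|^q\le C^q\sup_{\|T\|\le 1}\sum_j\|T(x_j)\|_{\ell_p}^q ,
\]
the supremum being taken over $T\in\mathcal L(E;\ell_p)$. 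This is precisely a Pietsch-domination hypothesis for the family of $q$-homogeneous functions $x\mapsto\|T(x)\|_{\ell_p}^q$, and I would feed it into the Pietsch Domination machinery to produce a regular Borel probability measure $\mu$ on the weak$^{*}$-compact closure $K$ of $B_{\mathcal L(E;\ell_p)}$ such that $\|x\|^q\le C^q\int_K\|T(x)\|_{\ell_p}^q\,d\mu(T)$ for every $x\in E$.

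Since $\|T(x)\|_{\ell_p}\le\|x\|$ for $T\in K$, the two-sided estimate $\tfrac1C\|x\|\le\big(\int_K\|T(x)\|_{\ell_p}^q\,d\mu\big)^{1/q}\le\|x\|$ shows that $x\mapsto(T\mapsto T(x))$ is an isomorphism of $E$ onto a subspace of the Bochner space $L_q(\mu;\ell_p)$. It then remains to pass from this $\ell_p$-valued space to a scalar $L_q(\nu)$, and this is exactly where the hypothesis $q\le p$ enters: one needs $\ell_p$ to be isometric to a subspace of $L_q$, so that $L_q(\mu;\ell_p)$ in turn embeds into a scalar $L_q(\nu)$, yielding the asserted embedding of $E$. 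In the isotropic case $q=p$ this is automatic, because $L_p(\mu;\ell_p)$ is already a scalar $L_p$-space (it is $L_p$ of the product of $\mu$ with counting measure); this also accounts for the remark that the converse is available precisely when $q=p$, since there the whole chain of identifications is reversible and $E\hookrightarrow L_p(\nu)$ can be read backwards into the coincidence.

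I expect the main obstacle to be twofold. First, the correct choice of the compact parameter space $K$ and the verification that the general Pietsch Domination Theorem genuinely applies to the non-linear maps $\|T(\cdot)\|_{\ell_p}^q$, the delicate point being that $T\mapsto\|T(x)\|_{\ell_p}$ is only lower semicontinuous in the weak$^{*}$ topology, so one must argue through the finite truncations $\ell_p^N$ (for which continuity is clear) and pass to the limit, or invoke measurability directly. Second, and more essential, the scalarization step: reducing $L_q(\mu;\ell_p)$ to a scalar $L_q$ requires the isometric embedding $\ell_p\hookrightarrow L_q$, whose validity is governed exactly by the relation $q\le p$ and which fails to reverse outside the isotropic situation — precisely the reason the statement asserts only the implication for $q<p$ while promising the equivalence when $q=p$.
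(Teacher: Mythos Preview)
The paper does not actually prove this proposition; it omits the argument, saying only that it ``stems from adaptations'' of Proposition~3.1 and Theorem~4.5 in \cite{karnsinha}. So there is no explicit proof in the paper to compare against, and your proposal must stand on its own.

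Your argument for (i) is correct and is the natural one via the operator reformulation \eqref{opdef}; nothing more is needed there.

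Your argument for (ii), however, has a genuine gap at the scalarization step. You assert that the hypothesis $q\le p$ is ``exactly'' what guarantees an isometric embedding $\ell_p\hookrightarrow L_q$, so that $L_q(\mu;\ell_p)$ embeds into a scalar $L_q(\nu)$. This is false: $\ell_p$ embeds (isometrically, or even isomorphically) into some $L_q$ if and only if $q\le p\le 2$ or $q=p$. For $p>2$ and $q<p$ the embedding fails, since $L_q$ has cotype $\max(q,2)<p$ while $\ell_p$ does not have any cotype strictly smaller than $p$. Moreover the obstruction is not an artifact of your particular route: taking $E=\ell_p$ with $p>2$ and any $q<p$, one checks directly from \eqref{opdef} with $T=\mathrm{id}_{\ell_p}$ that $\ell_{q,p}^{\mathrm{mid}}(\ell_p)=\ell_q(\ell_p)$, yet $\ell_p$ is not a subspace of any $L_q(\mu)$ by the cotype obstruction just mentioned. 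So your identification of the role of the hypothesis $q\le p$ is incorrect, and the passage from the vector-valued domination to a scalar $L_q$-embedding cannot be completed as written. The clean reduction $L_p(\mu;\ell_p)=L_p(\mu\times\text{counting})$ that you correctly isolate for the isotropic case $q=p$ simply has no analogue when $q<p$ and $p>2$; this suggests that the adaptation of the Karn--Sinha argument to the anisotropic setting is not as routine as the paper indicates and likely requires an additional restriction such as $p\le 2$.
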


Before we show the next theorem, the following remark presents some required facts.

\begin{remark}\label{obs2}\rm
Let $x = (x_{j})_{j=1}^\infty \in \ell_{q}^w(E)$. It
is immediate that the operator
\begin{equation*}
\Psi_x :E^{*} \rightarrow \ell_{q};\ \Psi_x(x^*) :=
(x^*(x_j))_{j=1}^\infty,
\end{equation*}
is well-defined, linear and continuous, with $\|\Psi_x\| =
\|(x_{j})_{j=1}^\infty\|_{q,w}$. Furthermore, if $q \le p$, thanks to Minkowski's inequality, we
have
\begin{equation}  \label{mink}
\left(\sum_{n=1}^\infty \left(\sum_{j=1}^\infty
|x_{n}^*(x_{j})|^{q}\right)^{{p}/{q}}\right)^{1/{p}}
\leq\left(\sum_{j=1}^\infty \left(\sum_{n=1}^\infty
|x_{n}^*(x_{j})|^{p}\right)^{{q}/{p}}\right)^{1/{q}}
\end{equation}
and if $x = (x_{j})_{j=1}^\infty \in \ell_{q,p}^{\text{mid}}(E)$, then the
induced operator
\begin{equation*}
\widetilde{\Psi_x} :\ell_p^w(E^{*}) \rightarrow \ell_{p}(\ell_{q});\
\widetilde{\Psi_x}((x_n^*)_{n=1}^\infty) :=
((x_n^*(x_j))_{j=1}^\infty)_{n=1}^\infty
\end{equation*}
is well-defined, linear and, by the Closed Graph Theorem (see \cite%
{botelhocampossantos}, Proposition 1.4), is continuous with $\|\widetilde{%
\Psi_x}\| \le \|(x_{j})_{j=1}^\infty\|_{q,p}$ (the equality occurs
when $q=p$). 
\end{remark}

The following theorem provides an result on mid $(q,p)$-summable sequences in terms of absolutely summing operators that, besides its intrinsic relevance, will be very useful to prove other important results throughout our work. 

\begin{theorem}
\label{caracter} Let $x = (x_{j})_{j=1}^\infty \in \ell_{q}^w(E)$. Consider the following sentences:
\begin{description}
\item[(i)] $x = (x_{j})_{j=1}^\infty \in \ell_{q,p}^{\mathrm{mid}}(E)$.
\item[(ii)] $\Psi_x \in \Pi_{p}(E^{*};\ell_{q})$.
\end{description}
So occurs that:
\begin{itemize}
\item If $q \le p$, then $\mathrm{(i)} \Rightarrow \mathrm{(ii)}$ and $%
\pi_{p}(\Psi_x) \le \|(x_{j})_{j=1}^\infty\|_{q,p}$.
\item If $q \ge p$, then $\mathrm{(ii)} \Rightarrow \mathrm{(i)}$ and $%
\|(x_{j})_{j=1}^\infty\|_{q,p} \le \pi_{p}(\Psi_x)$.
\end{itemize}
Of course, the sentences are equivalent and the equality of norms holds when
$q=p$.
\end{theorem}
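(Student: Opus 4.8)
The plan is to translate the $p$-summing condition for the operator $\Psi_x \colon E^* \to \ell_q$ into a statement about the double-indexed sums that define $\ell_{q,p}^{\text{mid}}(E)$, and then to bridge the two mixed norms by Minkowski's inequality in the appropriate direction. Throughout I would use that $\|\Psi_x(x_n^*)\|_{\ell_q} = \left(\sum_{j=1}^\infty |x_n^*(x_j)|^q\right)^{1/q}$, so that for a sequence $(x_n^*)_{n=1}^\infty$ in $E^*$ the left-hand side of the $p$-summing inequality is exactly $\left(\sum_n \left(\sum_j |x_n^*(x_j)|^q\right)^{p/q}\right)^{1/p}$. The first thing I would record is that the weak $p$-norm entering the definition of $\Pi_p(E^*;\ell_q)$ is computed over $B_{E^{**}}$ and therefore coincides with the weak $p$-norm used in the definition of $\|\cdot\|_{q,p}$ (whose supremum is taken over $B_{\ell_p^w(E^*)}$); this is precisely what makes the two constants directly comparable with no spurious factor.

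For the implication $\mathrm{(i)} \Rightarrow \mathrm{(ii)}$ under $q \le p$, I would fix $(x_n^*)_{n=1}^\infty \in \ell_p^w(E^*)$ and estimate
\[
\left(\sum_n \|\Psi_x(x_n^*)\|_{\ell_q}^p\right)^{1/p} = \left(\sum_n \left(\sum_j |x_n^*(x_j)|^q\right)^{p/q}\right)^{1/p} \le \left(\sum_j \left(\sum_n |x_n^*(x_j)|^p\right)^{q/p}\right)^{1/q},
\]
where the inequality is \eqref{mink} from Remark \ref{obs2}, valid precisely because $q \le p$. Since $x \in \ell_{q,p}^{\text{mid}}(E)$, the right-hand side is bounded by $\|(x_j)_j\|_{q,p}\,\|(x_n^*)_n\|_{w,p}$ by the very definition of $\|\cdot\|_{q,p}$ together with homogeneity. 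As finite sequences in $E^*$ belong to $\ell_p^w(E^*)$, this is exactly the $p$-summing inequality for $\Psi_x$ with constant $\|(x_j)_j\|_{q,p}$, whence $\Psi_x \in \Pi_p(E^*;\ell_q)$ and $\pi_p(\Psi_x) \le \|(x_j)_j\|_{q,p}$.

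For the implication $\mathrm{(ii)} \Rightarrow \mathrm{(i)}$ under $q \ge p$, Minkowski runs the other way, namely
\[
\left(\sum_j \left(\sum_n |x_n^*(x_j)|^p\right)^{q/p}\right)^{1/q} \le \left(\sum_n \left(\sum_j |x_n^*(x_j)|^q\right)^{p/q}\right)^{1/p} = \left(\sum_n \|\Psi_x(x_n^*)\|_{\ell_q}^p\right)^{1/p},
\]
which holds because $p \le q$. For $(x_n^*)_n \in \ell_p^w(E^*)$ the last expression is, by $\mathrm{(ii)}$ and the equivalent infinite form of the $p$-summing inequality, at most $\pi_p(\Psi_x)\,\|(x_n^*)_n\|_{w,p} < \infty$. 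This finiteness shows that \eqref{desigdef} holds for every $(x_n^*)_n \in \ell_p^w(E^*)$, i.e. $x \in \ell_{q,p}^{\text{mid}}(E)$, and passing to the supremum over $B_{\ell_p^w(E^*)}$ yields $\|(x_j)_j\|_{q,p} \le \pi_p(\Psi_x)$. When $q = p$ both bullet points apply at once and Minkowski becomes an equality, so $\mathrm{(i)} \Leftrightarrow \mathrm{(ii)}$ with $\pi_p(\Psi_x) = \|(x_j)_j\|_{q,p}$.

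The individual estimates are short, so the real care lies in two bookkeeping points. The main one is orienting Minkowski's inequality correctly: the hypotheses $q \le p$ and $q \ge p$ are exactly what select the usable direction, and this asymmetry is the reason each regime produces only one of the two implications rather than the full equivalence. The secondary point is confirming that the weak $p$-norm on $E^*$ in the $p$-summing definition is the same norm used in $\|\cdot\|_{q,p}$, so that the constants transfer with equality; once these are settled, the chains of inequalities above close the argument.
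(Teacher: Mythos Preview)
Your argument is correct and follows exactly the same route as the paper: compute $\left(\sum_n \|\Psi_x(x_n^*)\|_{\ell_q}^p\right)^{1/p}$ as the mixed double sum and then apply Minkowski's inequality \eqref{mink} in the direction permitted by the hypothesis ($q\le p$ or $q\ge p$) to compare it with the $\ell_{q,p}^{\mathrm{mid}}$ expression. Your added remark that the weak $p$-norm on $E^*$ appearing in the $p$-summing definition is the same $\ell_p^w(E^*)$-norm used in $\|\cdot\|_{q,p}$ is a worthwhile clarification the paper leaves implicit.
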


\begin{proof}
\noindent $\mathrm{(i)} \Rightarrow \mathrm{(ii)}$: By Remark \ref{obs2}, if $x = (x_{j})_{j=1}^\infty \in \ell_{q,p}^{\text{mid}}(E)$ and since $q\le p$, we have
\begin{align*}
\left(\sum_{n=1}^{\infty}\|(\Psi_x(x^*_n))_{n=1}^\infty\|_{q}^{p}\right)^{1/{p}} & = \left(\sum_{n=1}^\infty \left(\sum_{j=1}^\infty |x_{n}^*(x_{j})|^{q}\right)^{{p}/{q}}\right)^{1/{p}} \\
& \le \left(\sum_{j=1}^\infty \left(\sum_{n=1}^\infty |x_{n}^*(x_{j})|^{p}\right)^{{q}/{p}}\right)^{1/{q}} < \infty,
\end{align*}
whenever $(x_n^*)_{n=1}^\infty \in \ell_p^w(E^*)$. So, $\Psi_x \in \Pi_{p}(E^{*};\ell_{q})$ and $\pi_{p}(\Psi_x) = \|\widetilde{\Psi_x}\| \le \|(x_{j})_{j=1}^\infty\|_{q,p}$

\noindent The implication $\mathrm{(ii)} \Rightarrow \mathrm{(i)}$ follows by the hypothesis and an inversion of the above inequality, since $q \ge p$.
\end{proof}

It is known that $\Pi_{p}(E;F) \overset{1}{\hookrightarrow} \Pi_{s}(E;F)$, if $p \leq s$ (see \cite{djt}, Theorem 10.4). Joining this inclusion result and the Theorem \ref{caracter} we obtain the next proposition.

\begin{proposition}
	\label{inclus} If $(q,p)$ and $(r,s)$ are parameters such that $q \le p \le s \le r$. Then, for every Banach space $E$, we have
	\begin{equation}  \label{inc2}
	\ell_{q,p}^{\mathrm{mid}}(E) \overset{1}{\hookrightarrow}
	\ell_{r,s}^{\mathrm{mid}}(E).
	\end{equation}
	In particular,
	\begin{equation}  \label{incmid}
	\ell_p^{\mathrm{mid}}(E) \overset{1}{\hookrightarrow} \ell_s^{\mathrm{mid}}(E),\ \mathrm{if\ }
	p\leq s.
	\end{equation}
\end{proposition}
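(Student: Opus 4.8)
The plan is to read off the embedding from Theorem \ref{caracter}, applied once in each direction, combined with the two inclusion facts now at hand: the ideal inclusion $\Pi_{p} \overset{1}{\hookrightarrow} \Pi_{s}$ (valid since $p \le s$) recalled just above, and the norm-one inclusion $\ell_{q} \overset{1}{\hookrightarrow} \ell_{r}$ of scalar sequence spaces (valid since $q \le r$, which is guaranteed by $q \le p \le s \le r$). The hypothesis $q \le p \le s \le r$ is exactly what makes all four of these ingredients available with the correct directions.

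First I would fix $x = (x_{j})_{j=1}^{\infty} \in \ell_{q,p}^{\mathrm{mid}}(E)$. By the chain \eqref{chain} we have $x \in \ell_{q}^{w}(E)$, so the case $q \le p$ of Theorem \ref{caracter} gives that $\Psi_{x} \colon E^{*} \to \ell_{q}$, $\Psi_{x}(x^{*}) = (x^{*}(x_{j}))_{j=1}^{\infty}$, belongs to $\Pi_{p}(E^{*};\ell_{q})$ with $\pi_{p}(\Psi_{x}) \le \|(x_{j})_{j=1}^{\infty}\|_{q,p}$. The crux is then to transport this to the pair $(r,s)$, whose associated operator has a \emph{different codomain}, namely $\Phi_{x} \colon E^{*} \to \ell_{r}$ with the same formula $\Phi_{x}(x^{*}) = (x^{*}(x_{j}))_{j=1}^{\infty}$. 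Since $q \le r$ one has $\Phi_{x} = \iota \circ \Psi_{x}$, where $\iota \colon \ell_{q} \to \ell_{r}$ is the natural inclusion with $\|\iota\| \le 1$. Applying $\Pi_{p} \overset{1}{\hookrightarrow} \Pi_{s}$ yields $\Psi_{x} \in \Pi_{s}(E^{*};\ell_{q})$ with $\pi_{s}(\Psi_{x}) \le \pi_{p}(\Psi_{x})$, and the ideal property of $\Pi_{s}$ then gives $\Phi_{x} = \iota \circ \Psi_{x} \in \Pi_{s}(E^{*};\ell_{r})$ with $\pi_{s}(\Phi_{x}) \le \|\iota\|\,\pi_{s}(\Psi_{x}) \le \pi_{s}(\Psi_{x})$.

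To close, I would note that $q \le r$ also forces $x \in \ell_{r}^{w}(E)$, so the case $r \ge s$ of Theorem \ref{caracter} applies to $\Phi_{x}$ and delivers $x \in \ell_{r,s}^{\mathrm{mid}}(E)$ with $\|(x_{j})_{j=1}^{\infty}\|_{r,s} \le \pi_{s}(\Phi_{x})$. Concatenating the inequalities produces
\[
\|(x_{j})_{j=1}^{\infty}\|_{r,s} \le \pi_{s}(\Phi_{x}) \le \pi_{s}(\Psi_{x}) \le \pi_{p}(\Psi_{x}) \le \|(x_{j})_{j=1}^{\infty}\|_{q,p},
\]
which is precisely the isometric embedding \eqref{inc2}; the set inclusion is automatic from the same computation. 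The particular case \eqref{incmid} then follows by taking $q=p$ and $r=s$, so that $q \le p \le s \le r$ reduces to $p \le s$ and the two mid-spaces collapse to $\ell_{p}^{\mathrm{mid}}(E)$ and $\ell_{s}^{\mathrm{mid}}(E)$.

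The step I expect to be the only non-formal one is the bookkeeping around the two distinct operators $\Psi_{x}$ and $\Phi_{x}$: the index governing the ``absolute'' direction of the mid-space sits in the \emph{codomain} of the associated operator, so passing from $(q,p)$ to $(r,s)$ genuinely changes the target from $\ell_{q}$ to $\ell_{r}$. Recognizing that this change is absorbed by the norm-one inclusion $\ell_{q} \hookrightarrow \ell_{r}$ together with the ideal property of $\Pi_{s}$ is the key observation; once that is in place, everything reduces to chaining inequalities whose directions are dictated by the hypotheses $q \le p \le s \le r$.
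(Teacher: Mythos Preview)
Your proof is correct and follows essentially the same route as the paper: apply Theorem \ref{caracter} (direction $q\le p$) to pass from $x\in\ell_{q,p}^{\mathrm{mid}}(E)$ to $\Psi_x\in\Pi_p(E^*;\ell_q)$, use $\Pi_p\overset{1}{\hookrightarrow}\Pi_s$ and the inclusion $\ell_q\hookrightarrow\ell_r$ to land in $\Pi_s(E^*;\ell_r)$, then apply Theorem \ref{caracter} (direction $r\ge s$) to conclude. If anything, your version is slightly more careful than the paper's: where the authors write ``$\Psi_x\in\Pi_s(E^*;\ell_r)$'' without comment, you explicitly introduce $\Phi_x=\iota\circ\Psi_x$ and invoke the ideal property of $\Pi_s$ with $\|\iota\|\le 1$, making the codomain change and the accompanying norm estimate $\pi_s(\Phi_x)\le\pi_s(\Psi_x)$ transparent.
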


\begin{proof}
	Under these assumptions we have
	\begin{align*}
	x = (x_{j})_{j=1}^\infty \in \ell_{q,p}^{\text{mid}}(E) & \Rightarrow \Psi_x \in \Pi_{p}(E^{*};\ell_{q})\\
	& \Rightarrow \Psi_x \in \Pi_{s}(E^{*};\ell_{q}) \\
	& \Rightarrow \Psi_x \in \Pi_{s}(E^{*};\ell_{r}) \Rightarrow x = (x_{j})_{j=1}^\infty \in \ell_{r,s}^{\text{mid}}(E).
	\end{align*}
	We also obtain, from above calculus, that
	\[\|(x_{j})_{j=1}^\infty\|_{r,s} \le \pi_{s}(\Psi_x) \le \pi_{p}(\Psi_x) \le \|(x_{j})_{j=1}^\infty\|_{q,p}.\]
\end{proof}

\begin{remark}\rm
The inclusion in (\ref{incmid}) has not been established in the
paper \cite{karnsinha} neither in the paper \cite{botelhocampossantos} and
as far as we know it had not yet been proven in the literature.
\end{remark}

We finish this section showing that the space of mid $(q,p)$-summable sequences fits in the abstract framework of sequence classes. This result has great importance in the next section.

\begin{proposition}
\label{seqclass} The correspondence $E \mapsto \ell_{q,p}^{\mathrm{mid}}(E)$ is a finitely determined and linearly stable sequence class.
\end{proposition}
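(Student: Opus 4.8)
The plan is to verify, in order, the three defining conditions of a sequence class, then finite determination, and finally linear stability, leaning on the framework of \cite{botelhocampos} to avoid hand computation wherever possible.

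First I would check that $E \mapsto \ell_{q,p}^{\mathrm{mid}}(E)$ is a sequence class. The three requirements are: $c_{00}(E) \subseteq \ell_{q,p}^{\mathrm{mid}}(E) \overset{1}{\hookrightarrow} \ell_\infty(E)$, and $\|e_j\|_{\ell_{q,p}^{\mathrm{mid}}(\mathbb{K})} = 1$ for every $j$. The first two are exactly Remark \ref{inc00}, where the chain \eqref{chain} was used to sandwich the space between $c_{00}(E)$ and $\ell_\infty(E)$ with norm one into $\ell_\infty(E)$; I would simply cite it. For the normalization of the canonical vectors, I would compute $\|e_j\|_{q,p}$ directly in $\mathbb{K}$: here $\ell_p^w(\mathbb{K}^*) = \ell_p$, and for a scalar sequence $(\lambda_n)_{n=1}^\infty \in B_{\ell_p}$ the inner sum collapses because $e_j$ has a single nonzero coordinate equal to $1$, so the expression in \eqref{norma} reduces to $\sup_{(\lambda_n) \in B_{\ell_p}} |\lambda_j| = 1$. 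This is a short, routine evaluation.

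Next I would establish finite determination. The condition asks that $(x_j)_{j=1}^\infty \in \ell_{q,p}^{\mathrm{mid}}(E)$ if and only if $\sup_k \|(x_j)_{j=1}^k\|_{q,p} < \infty$, with the norm then realized as that supremum. The key observation is that the defining quantity \eqref{norma} is a supremum over $(x_n^*)_{n=1}^\infty \in B_{\ell_p^w(E^*)}$ of the outer sum $\sum_{j=1}^\infty(\cdots)^{q/p}$, which is itself a supremum over $k$ of the partial sums $\sum_{j=1}^k(\cdots)^{q/p}$ since all terms are nonnegative. Interchanging the two suprema (which is always legitimate) shows directly that $\|(x_j)_{j=1}^\infty\|_{q,p} = \sup_k \|(x_j)_{j=1}^k\|_{q,p}$, and finiteness of one side is equivalent to finiteness of the other. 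I expect no obstacle here beyond justifying the swap of suprema, which is immediate.

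Finally, linear stability: I must show that for every $T \in \mathcal{L}(E;F)$ the induced map $\widehat{T}$ sends $\ell_{q,p}^{\mathrm{mid}}(E)$ into $\ell_{q,p}^{\mathrm{mid}}(F)$ with $\|\widehat{T}\| = \|T\|$. The natural route is through the operator characterization \eqref{opdef}: a sequence is mid $(q,p)$-summable precisely when $(S(x_j))_{j=1}^\infty \in \ell_q(\ell_p)$ for every $S \in \mathcal{L}(E;\ell_p)$. Given $(x_j) \in \ell_{q,p}^{\mathrm{mid}}(E)$ and any $S \in \mathcal{L}(F;\ell_p)$, the composition $S \circ T \in \mathcal{L}(E;\ell_p)$, so $(S(T(x_j)))_{j=1}^\infty = ((S\circ T)(x_j))_{j=1}^\infty \in \ell_q(\ell_p)$, giving $(T(x_j)) \in \ell_{q,p}^{\mathrm{mid}}(F)$. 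For the norm equality, the inequality $\|\widehat{T}\| \le \|T\|$ follows because precomposition by $T$ scales each admissible functional sequence $(x_n^*)_{n=1}^\infty \in B_{\ell_p^w(F^*)}$ into $(T^*(x_n^*))_{n=1}^\infty$, which lies in the ball of radius $\|T\|$ in $\ell_p^w(E^*)$, so \eqref{norma} for $T(x_j)$ is bounded by $\|T\|\,\|(x_j)\|_{q,p}$. The reverse inequality $\|T\| \le \|\widehat{T}\|$ is obtained, as usual, by testing on length-one sequences $(x,0,0,\dots)$ and using that $\|(x,0,\dots)\|_{q,p} = \|x\|$ together with $\|(T(x),0,\dots)\|_{q,p} = \|T(x)\|$ from the chain \eqref{chain} applied in the case of a single nonzero term. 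The main subtlety to handle carefully is confirming that $(T^*(x_n^*))_{n=1}^\infty$ genuinely has $\ell_p^w(E^*)$-norm at most $\|T\|$, which reduces to the standard identity $\|\widehat{T^*}\colon \ell_p^w(F^*) \to \ell_p^w(E^*)\| = \|T^*\| = \|T\|$, i.e. the linear stability of the weakly summable class itself.
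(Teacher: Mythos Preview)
Your proposal is correct and follows essentially the same approach as the paper: Remark~\ref{inc00} for the sequence-class containments, a swap of suprema for finite determination, and linear stability via the adjoint action on $\ell_p^w$ of the dual (you phrase this through the operator characterization \eqref{opdef}, which is the same argument under the identification $\ell_p^w(F^*)\cong\mathcal{L}(F;\ell_p)$). One small slip: in your direct computation of $\|e_j\|_{q,p}$ it is the \emph{outer} sum over the sequence index that collapses to a single term, and the surviving expression is $\sup_{(\lambda_n)\in B_{\ell_p}}\|(\lambda_n)\|_p=1$, not $\sup_{(\lambda_n)}|\lambda_j|$; the paper sidesteps this by sandwiching $\|e_j\|_{q,p}$ between $\|e_j\|_{q,w}=1$ and $\|e_j\|_q=1$ via Proposition~\ref{propchain}.
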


\begin{proof}
As $\|e_j\|_{q,w} = \|e_j\|_{q} =1$, the Proposition \ref{propchain} ensures that $\|e_j\|_{q,p} =1$  in $\ell_{q,p}^{\text{mid}}(\mathbb{K})$, where $e_j$ is the $j$-th canonical unit scalar-valued sequence. The Remark \ref{inc00} gives us the remaining properties to prove that $\ell_{q,p}^{\text{mid}}(\cdot)$ is a sequence class.

Let $(x_j)_{j=1}^\infty$ be a $E$-valued sequence. If $(x_j)_{j=1}^\infty \in \ell_{q,p}^{\text{mid}}(E)$, then the double series in (\ref{desigdef}) converges and, therefore,
$$\sup_{(x_n^*)_{n=1}^\infty \in  B_{\ell_p^w(E^{*})}}\sum_{j=1}^\infty \left(\sum_{n=1}^\infty |x_n^*(x_j)|^{p}\right)^{q/p} = \sup_k \sup_{(x_n^*)_{n=1}^\infty \in  B_{\ell_p^w(E^{*})}}\sum_{j=1}^k \left(\sum_{n=1}^\infty |x_n^*(x_j)|^{p}\right)^{q/p}.$$
Conversely, if the sup in the right side of above equality is finite then $(x_j)_{j=1}^\infty \in \ell_{q,p}^{\text{mid}}(E)$. So,
$$(x_j)_{j=1}^\infty \in \ell_{q,p}^{\text{mid}}(E)\ \mathrm{if\ and\ only\ if}\ \sup\limits_k \|(x_j)_{j=1}^k\|_{q,p} < \infty$$ and
$\|(x_j)_{j=1}^\infty\|_{q,p} = \sup\limits_k \|(x_j)_{j=1}^k\|_{q,p}$. Thus, $\ell_{q,p}^{\text{mid}}(\cdot)$ is a finitely determined. 

To prove the linear stability of $\ell_{q,p}^{\text{mid}}(\cdot)$, let $T \in {\cal L}(E;F)$ and $(x_j)_{j=1}^\infty \in \ell_{q,p}^{\text{mid}}(E)$. By the linear stability of $\ell_p^{w}(\cdot)$  \cite[Theorem 3.3]{botelhocampos}, we have $(T^*(y_n^*))_{n=1}^\infty = (y_n^* \circ T)_{n=1}^\infty\in \ell_p^w(E^{*})$, for every $(y_n^*)_{n=1}^\infty \in \ell_p^w(F^{*})$, where $T^{*}\colon F^{*} \longrightarrow E^{*}$ is the adjoint of $T$. Therefore,
\[\left(\left(y_n^*(T\left(x_j\right))\right)_{j=1}^\infty\right)_{n=1}^\infty = \left(\left(y_n^* \circ T\left(x_j\right)\right)_{j=1}^\infty\right)_{n=1}^\infty \in \ell_{q}\left(\ell_{p}\right)\]
and $(T(x_j))_{j=1}^\infty \in \ell_{q,p}^{\text{mid}}(F)$, whenever $(x_j)_{j=1}^\infty \in \ell_{q,p}^{\text{mid}}(E)$. Defining the induced operator $\widehat{T}\colon \ell_{q,p}^{\text{mid}}(E) \longrightarrow \ell_{q,p}^{\text{mid}}(F)$ by $\widehat{T}((x_j)_{j=1}^\infty) = (T(x_j))_{j=1}^\infty$, a standard calculation shows that $\|T\|= \|\widehat{T}\|$.
\end{proof}

\section{Mid summing operators}\label{operatorsec}

Now we will study classes of operators that
are characterized by transformations of sequences involving the space of mid
$(q,p)$-summable sequences.\medskip

From now on $1 \leq q, p, r <\infty$ are real numbers and $T \in
\mathcal{L}(E;F)$ is a continuous linear operator.

\begin{definition}\label{defopA} \rm
If $q\leq r$, an operator $T$
is said to be \emph{absolutely mid $(r;q,p)$-summing} if
\begin{equation}
\left( T\left( x_{j}\right) \right) _{j=1}^{\infty }\in \ell _{r}(F)\ \
\mathrm{whenever}\ \ (x_{j})_{j=1}^{\infty }\in \ell
_{q,p}^{\text{mid}}(E).  \label{aps}
\end{equation}%
\end{definition}

\begin{definition}\label{defopF} \rm
If $q \geq r$. An operator $T$
is said to be \emph{weakly mid $(q,p;r)$-summing} if
\begin{equation}
\left( T\left( x_{j}\right) \right) _{j=1}^{\infty }\in \ell
_{q,p}^{\text{mid}}(F)\ \ \mathrm{whenever}\ \ (x_{j})_{j=1}^{\infty }\in
\ell _{r}^{w}(E).  \label{wps}
\end{equation}
\end{definition}
We denote respectively by $\Pi_{r;q,p}^{\text{mid}}(E;F)$ and $W_{q,p;r}^{\text{mid}}(E;F)$ the above spaces. 

\medskip A standard calculation shows that if $r < q$ then $
\Pi_{r;q,p}^{\text{mid}}(E;F) = \{0\}$ and if $r > q$ then $
W_{q,p;r}^{\text{mid}}(E;F) = \{0\}$. This facts justify the constrains over $q$ and $r$ in Definitions \ref{defopA} and  \ref{defopF}.

\medskip The above definitions recover the classes of mid summing operators
as defined in the paper \cite[Definition 2.1]{botelhocampossantos}. Indeed, 
\begin{equation*}
\Pi_{q;p}^{\text{mid}}(E;F) = \Pi_{q;p,p}^{\text{mid}}(E;F)\ \mathrm{and\ }
W_{q;p}^{\text{mid}}(E;F) = W_{q,q;p}^{\text{mid}}(E;F).
\end{equation*}
Moreover, when $q=p$ we recover the classes
\begin{equation}\label{notamid}
\Pi_{p}^{\text{mid}}(E;F) = \Pi_{p;p,p}^{\text{mid}}(E;F)\ \mathrm{and\ }
W_{p}^{\text{mid}}(E;F) = W_{p,p;p}^{\text{mid}}(E;F).
\end{equation}

\begin{remark}\rm 
Follows immediately from above definitions that if $1 \le r,s < \infty$ are real numbers such that $r \le q \le s$, then
\begin{equation*}
\Pi_{s;q,p}^{\text{mid}} \circ W_{q,p;r}^{\text{mid}} \subseteq \Pi_{s,r}.
\end{equation*}
\end{remark}

Using the facts proved in Proposition \ref{seqclass} and the abstract approach in \cite{botelhocampos}, the next two theorems
are immediate consequences of \cite[Proposition 1.4]{botelhocampos}, with
the equivalences involving $\ell_p^u(E)$, in Theorem \ref{teoweak}, due to
\cite[Corollary 1.6]{botelhocampos}.

\begin{theorem}
	The following sentences are equivalent:
	
	\begin{description}
		\item[(i)] $T\in \Pi_{r;q,p}^{\mathrm{mid}}(E;F)$.
		
		\item[(ii)] The induced map $\widehat{T}\colon \ell_{q,p}^{\mathrm{mid}}(E)
		\longrightarrow \ell_r(F)$ is a well defined continuous linear operator.
		
		\item[(iii)] There is a constant $A> 0$ such that $\left\|\left(T\left(x_j%
		\right)\right)_{j=1}^k\right\|_r \leq A
		\left\|(x_j)_{j=1}^k\right\|_{q,p}$ for every $k \in \mathbb{N}$ and
		all $x_j \in E$, $j=1,\ldots,k$.
		
		\item[(iv)] There is a constant $A > 0$ such that $\left\|\left(T\left(x_j%
		\right)\right)_{j=1}^\infty\right\|_r \leq A
		\left\|(x_j)_{j=1}^\infty\right\|_{q,p}$ for every $%
		\left(x_j\right)_{j=1}^\infty \in \ell_{q,p}^{\mathrm{mid}}(E)$.
	\end{description}
	
	Furthermore,
	\begin{equation*}
	\|T\|_{\Pi_{r;q,p}^{\mathrm{mid}}}:= \|\widetilde{T}\| = \inf\{A: \mathrm{%
		(iii)~holds}\} = \inf\{A: \mathrm{(iv)~holds}\}.
	\end{equation*}
\end{theorem}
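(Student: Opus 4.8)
The plan is to deduce the theorem directly from the abstract theory of sequence classes developed in \cite{botelhocampos}, exactly as the paragraph preceding the statement announces. The point is that $\Pi_{r;q,p}^{\mathrm{mid}}$ is, by design, the class $\mathcal{L}_{X;Y}$ of $(X;Y)$-summing operators associated with the sequence classes $X(\cdot)=\ell_{q,p}^{\mathrm{mid}}(\cdot)$ and $Y(\cdot)=\ell_r(\cdot)$; so once the hypotheses of \cite[Proposition 1.4]{botelhocampos} are checked for this pair, all four equivalences and the norm identity follow at once.

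First I would make the identification explicit. Comparing Definition \ref{defopA} with the notion of $(X;Y)$-summing operator, condition \eqref{aps} says precisely that $(T(x_j))_{j=1}^\infty\in\ell_r(F)$ whenever $(x_j)_{j=1}^\infty\in\ell_{q,p}^{\mathrm{mid}}(E)$, i.e. $T\in\mathcal{L}_{X;Y}(E;F)$; and the definition of $\|\cdot\|_{X;Y}$ identifies $\|T\|_{\Pi_{r;q,p}^{\mathrm{mid}}}$ with $\|\widehat{T}\colon X(E)\to Y(F)\|$. This already puts (i) and (ii) on the same footing.

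Second, I would verify the standing hypotheses of the abstract proposition, namely that the two sequence classes in play are finitely determined. For the domain this is exactly Proposition \ref{seqclass}. For the target $\ell_r(\cdot)$ it is elementary: it is a sequence class with $\|e_j\|_r=1$, and since its norm is the $\ell_r$-norm of the scalar sequence $(\|x_j\|)_{j}$, one has $\sup_k\|(x_j)_{j=1}^k\|_r=\|(x_j)_{j=1}^\infty\|_r$, which is the finite-determination condition. With both facts in hand, \cite[Proposition 1.4]{botelhocampos} applies and yields (i)$\Leftrightarrow$(ii)$\Leftrightarrow$(iii)$\Leftrightarrow$(iv) together with $\|\widehat{T}\|=\inf\{A:\text{(iii) holds}\}=\inf\{A:\text{(iv) holds}\}$.

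The step I expect to carry the real weight is the equivalence between the finite-truncation inequality (iii) and the full-sequence inequality (iv); it is here that finite determination of both $\ell_{q,p}^{\mathrm{mid}}(\cdot)$ and $\ell_r(\cdot)$ is used. Concretely, transferring a uniform constant $A$ from the truncated estimates to the whole sequence requires taking suprema over $k$ on both sides and recognizing each side as the corresponding full norm, which is legitimate precisely because both classes are finitely determined. I would also note the notational mismatch between the induced operator $\widehat{T}$ and the symbol $\widetilde{T}$ appearing in the displayed norm formula, reading the latter as $\widehat{T}$ to stay consistent with Proposition \ref{seqclass}.
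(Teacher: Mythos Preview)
Your proposal is correct and matches the paper's own approach: the paper states that this theorem is an immediate consequence of \cite[Proposition~1.4]{botelhocampos} once Proposition~\ref{seqclass} establishes that $\ell_{q,p}^{\mathrm{mid}}(\cdot)$ is a finitely determined sequence class, which is exactly the reduction you carry out. Your remark on the $\widehat{T}/\widetilde{T}$ notational mismatch is also apt.
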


\begin{theorem}
	\label{teoweak} The following sentences are equivalent:
	
	\begin{description}
		\item[(i)] $T\in W_{q,p;r}^{\mathrm{mid}}(E;F)$.
		
		\item[(ii)] The induced map $\widehat{T}\colon \ell_r^{w}(E)
		\longrightarrow \ell_{q,p}^{\mathrm{mid}}(F)$ is a well defined continuous
		linear operator.
		
		\item[(iii)] $\left(T\left(x_j\right)\right)_{j=1}^\infty \in
		\ell_{q,p}^{\mathrm{mid}}(F)$ whenever $(x_j)_{j=1}^\infty \in \ell_{r}^{u}(E)$.
		
		\item[(iv)] The induced map $\widetilde{T}\colon \ell_r^{u}(E) \longrightarrow
		\ell_{q,p}^{\mathrm{mid}}(F)$ is a well defined continuous linear operator.
		
		\item[(v)] There is a constant $B >0$ such that $\left\|\left(T\left(x_j%
		\right)\right)_{j=1}^k\right\|_{q,p} \leq B
		\left\|(x_j)_{j=1}^k\right\|_{w,r}$ for every $k \in \mathbb{N}$ and all $%
		x_j \in E$, $j=1,\ldots,k$.
		
		\item[(vi)] There is a constant $B >0$ such that
		\begin{equation*}
		\left(\sum\limits_{j=1}^k \left(\sum\limits_{n=1}^\infty
		\left|y_n^*\left(T\left(x_j\right)\right)\right|^{p}\right)^{q/p}%
		\right)^{1/q}\leq B
		\left\|(x_j)_{j=1}^k\right\|_{w,r}\cdot\left\|(y_n^*)_{n=1}^\infty
		\right\|_{w,p}
		\end{equation*}
		for every $k \in \mathbb{N}$, all $x_j \in E$, $j=1,\ldots,k$, and every $%
		(y_n^*)_{n=1}^\infty \in \ell_{p}^{w}(F^*)$.
		
		\item[(vii)] There is a constant $B >0$ such that
		\begin{equation*}
		\left(\sum\limits_{j=1}^\infty \left(\sum\limits_{n=1}^\infty
		\left|y_n^*\left(T\left(x_j\right)\right)\right|^{p}\right)^{q/p}%
		\right)^{1/q}\leq B
		\left\|(x_j)_{j=1}^\infty\right\|_{w,r}\cdot\left\|(y_n^*)_{n=1}^\infty%
		\right\|_{w,p}
		\end{equation*}
		for all $\left(x_j\right)_{j=1}^\infty \in \ell_{r}^{w}(E)$ and $%
		(y_n^*)_{n=1}^\infty \in \ell_{p}^{w}(F^*)$.
	\end{description}
	
	Moreover,
	\begin{align*}
	\|T\|_{W_{q,p;r}^{\mathrm{mid}}}:= \|\widetilde{T}\| = \|\widehat{T}\| & =
	\inf\{B: \mathrm{(v)~holds}\} \\
	& = \inf\{B: \mathrm{(vi)~holds}\}= \inf\{B: \mathrm{(vii)~holds}\}.
	\end{align*}
\end{theorem}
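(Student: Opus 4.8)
The plan is to deduce the whole list of equivalences from the abstract theory of sequence classes developed in \cite{botelhocampos}, using that the relevant building blocks have already been verified earlier in the paper. First I would record that $\ell_r^w(\cdot)$ is a linearly stable sequence class and that, by Proposition \ref{seqclass}, $\ell_{q,p}^{\mathrm{mid}}(\cdot)$ is a finitely determined, linearly stable sequence class. Taking $X=\ell_r^w$ as domain class and $Y=\ell_{q,p}^{\mathrm{mid}}$ as the (finitely determined) target class, an operator $T$ is weakly mid $(q,p;r)$-summing in the sense of Definition \ref{defopF} precisely when it is $(X;Y)$-summing, so the equivalences (i) $\Leftrightarrow$ (ii) $\Leftrightarrow$ (v), together with the norm identity $\|T\|_{W_{q,p;r}^{\mathrm{mid}}}=\|\widehat{T}\|=\inf\{B:\mathrm{(v)\ holds}\}$, are exactly the content of \cite[Proposition 1.4]{botelhocampos}. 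The decisive point for (v) is the finite determination of the target: it is what lets the continuity of $\widehat{T}$ be tested on finitely many vectors, since $\|(T(x_j))_{j=1}^\infty\|_{q,p}=\sup_k\|(T(x_j))_{j=1}^k\|_{q,p}$.

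Next I would bring in the unconditional class $\ell_r^u(\cdot)$. Because $c_{00}(E)$ is dense in $\ell_r^u(E)$ and the target class is finitely determined, the characterization on $\ell_r^w$ and the one on $\ell_r^u$ must coincide, with the same optimal constant; this is precisely \cite[Corollary 1.6]{botelhocampos}, and it supplies conditions (iii) and (iv) together with the extra norm identity $\|\widetilde{T}\|=\|\widehat{T}\|$. Intuitively, conditions (iii)--(iv) cannot be weaker than (i)--(ii) since $\ell_r^u(E)\subseteq\ell_r^w(E)$, and they cannot be strictly stronger because the finite-sequence test in (v) only ever sees finitely supported sequences, which already lie in $\ell_r^u(E)$.

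Finally I would translate the abstract inequalities into the explicit double-sum form. Unfolding the definition of the norm in \eqref{norma}, the inequality in (v) reads $\sup_{(y_n^*)\in B_{\ell_p^w(F^*)}}(\sum_{j=1}^k(\sum_{n=1}^\infty |y_n^*(T(x_j))|^p)^{q/p})^{1/q}\le B\|(x_j)_{j=1}^k\|_{w,r}$, and by homogeneity in the variable $(y_n^*)$ the supremum over the unit ball is equivalent to the same inequality carrying the factor $\|(y_n^*)_{n=1}^\infty\|_{w,p}$ for arbitrary $(y_n^*)\in\ell_p^w(F^*)$; this yields (vi), and the $k=\infty$ version, again via finite determination of $\ell_{q,p}^{\mathrm{mid}}$, yields (vii), with matching infima of constants. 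The only step that is more than bookkeeping is the passage between $\ell_r^w$ and $\ell_r^u$, where one must be certain that finite determination of the codomain genuinely forces equality of the two optimal constants; everything else is a direct citation of \cite[Proposition 1.4]{botelhocampos} or a routine unpacking of \eqref{norma}.
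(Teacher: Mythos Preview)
Your proposal is correct and follows exactly the route the paper takes: the paper states that the theorem is an immediate consequence of \cite[Proposition 1.4]{botelhocampos}, using Proposition \ref{seqclass} to verify that $\ell_{q,p}^{\mathrm{mid}}(\cdot)$ is a finitely determined, linearly stable sequence class, and invokes \cite[Corollary 1.6]{botelhocampos} for the equivalences involving $\ell_r^u(E)$. Your additional remarks unpacking the norm \eqref{norma} to reach (vi) and (vii) and your intuitive justification for the $\ell_r^u$/$\ell_r^w$ passage are a bit more detailed than what the paper records, but the underlying argument is identical.
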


\medskip
The above classes of mid summing operators enjoy the operator ideal structure:

\begin{theorem}
	\label{teoideal}The classes $\left(\Pi_{r;q,p}^{\mathrm{mid}},
	\|\cdot\|_{\Pi_{r;q,p}^{\mathrm{mid}}}\right)$ and $%
	\left(W_{q,p;r}^{\mathrm{mid}}, \|\cdot\|_{W_{q,p;r}^{\mathrm{mid}}}\right)$ are
	Banach operator ideals.
\end{theorem}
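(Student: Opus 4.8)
The plan is to recognize both classes as concrete instances of the abstract $(X;Y)$-summing operators and then invoke the ideal-generating machinery of \cite{botelhocampos}, exactly as was done for the two characterization theorems above. Specifically, taking $X(\cdot) = \ell_{q,p}^{\text{mid}}(\cdot)$ and $Y(\cdot) = \ell_r(\cdot)$ we have $\Pi_{r;q,p}^{\text{mid}} = \mathcal{L}_{X;Y}$ by Definition \ref{defopA}, while taking $X(\cdot) = \ell_r^w(\cdot)$ and $Y(\cdot) = \ell_{q,p}^{\text{mid}}(\cdot)$ we have $W_{q,p;r}^{\text{mid}} = \mathcal{L}_{X;Y}$ by Definition \ref{defopF}. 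Under these identifications the norms $\|\cdot\|_{\Pi_{r;q,p}^{\text{mid}}}$ and $\|\cdot\|_{W_{q,p;r}^{\text{mid}}}$ are precisely the abstract norm $\|T\|_{X;Y} = \|\widehat{T}\|_{\mathcal{L}(X(E);Y(F))}$ introduced in Section \ref{introd}.

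First I would check that all three sequence classes appearing here are linearly stable and finitely determined. For $\ell_r(\cdot)$ and $\ell_r^w(\cdot)$ this is classical and is recorded in \cite{botelhocampos}; for $\ell_{q,p}^{\text{mid}}(\cdot)$ it is exactly the content of Proposition \ref{seqclass}. With the hypotheses in place, the result follows at once from the general theorem of \cite{botelhocampos} asserting that whenever $X$ and $Y$ are linearly stable sequence classes, the pair $(\mathcal{L}_{X;Y}, \|\cdot\|_{X;Y})$ is a Banach operator ideal. Applying this theorem to the two identifications above simultaneously yields that both $(\Pi_{r;q,p}^{\text{mid}}, \|\cdot\|_{\Pi_{r;q,p}^{\text{mid}}})$ and $(W_{q,p;r}^{\text{mid}}, \|\cdot\|_{W_{q,p;r}^{\text{mid}}})$ are Banach operator ideals.

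Since the heavy lifting is already done by Proposition \ref{seqclass} and by the abstract framework, there is no genuinely hard step: the verification of the ideal axioms (membership of the composites $\beta S T \alpha$, finiteness of the norm on finite-rank operators with the correct normalization, and the relevant norm inequalities) is handled uniformly by the cited theorem once linear stability is known. The one point deserving care is completeness, that is, the \emph{Banach} part of the conclusion. In this framework completeness of $\mathcal{L}_{X;Y}$ is obtained from the completeness of the target space $Y(F)$ together with the finitely determined property, which lets one pass from finite sections to full sequences while keeping control of the norms. Thus the only thing I must be sure of is that the target classes $\ell_r(\cdot)$ and $\ell_{q,p}^{\text{mid}}(\cdot)$ are complete and finitely determined, which is again guaranteed by the classical theory and by Proposition \ref{seqclass}, respectively. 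This closes the argument.
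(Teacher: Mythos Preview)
Your approach is the same as the paper's: identify each class as $\mathcal{L}_{X;Y}$ for suitable sequence classes and invoke \cite[Theorem 2.6]{botelhocampos}. However, you omit one hypothesis that the paper checks explicitly and that the cited theorem requires: the scalar compatibility $X(\mathbb{K}) \overset{1}{\hookrightarrow} Y(\mathbb{K})$. Linear stability alone does not guarantee that $\mathcal{L}_{X;Y}$ is an operator ideal; without $X(\mathbb{K}) \hookrightarrow Y(\mathbb{K})$ the class need not contain the finite-rank operators (nor satisfy $\|\mathrm{id}_{\mathbb{K}}\|_{X;Y}=1$). This is precisely where the parameter constraints in Definitions~\ref{defopA} and~\ref{defopF} enter, and you never use them.

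Concretely, for $\Pi_{r;q,p}^{\mathrm{mid}}$ one needs $\ell_{q,p}^{\mathrm{mid}}(\mathbb{K}) = \ell_q \overset{1}{\hookrightarrow} \ell_r = \ell_r(\mathbb{K})$, which holds because $q \le r$; for $W_{q,p;r}^{\mathrm{mid}}$ one needs $\ell_r^{w}(\mathbb{K}) = \ell_r \overset{1}{\hookrightarrow} \ell_q = \ell_{q,p}^{\mathrm{mid}}(\mathbb{K})$, which holds because $r \le q$. Add these one-line checks and your argument matches the paper's.
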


\begin{proof} Using the abstract framework, notation and language of \cite{botelhocampos}, a linear operator $T$ is absolutely mid $(r;q,p)$-summing if and only if $T$ is $\left(\ell_{q,p}^{\text{mid}}(\cdot);\ell_r(\cdot)\right)$-summing. Since $r \ge q$ we obtain $\ell_{q,p}^{\text{mid}}(\mathbb{K}) = \ell_{q}  \stackrel{1}{\hookrightarrow} \ell_r = \ell_r(\mathbb{K})$. In addition, all involved sequence classes are linearly stable. So, from \cite[Theorem 2.6]{botelhocampos} it follows that $\Pi_{r;q,p}^{\text{mid}}$ is a Banach operator ideal. The case of $W_{q,p;r}^{\text{mid}}$ can be proved similarly.
\end{proof}

\section{Inclusion and coincidence results}

In this section we are interested in the study of inclusions and coincidences  between certain classes of operators and the our classes of mid summing operators as these kinds of relations among themselves.

The first effort in this direction is given by the next theorem which asserts that $\left(\Pi_{p}\right)^{\mathrm{dual}} \subseteq W_{s,p;s}^{\text{mid}}$, for all $s \in [p,\infty)$.

\begin{theorem}
	\label{adjunto} Let $T \in \mathcal{L}(E;F)$. If the adjoint $T^*$ is
	absolutely $p$-summing, then $T \in W_{s,p;s}^{\mathrm{mid}}$, for all $s \in
	[p,\infty)$, and $\|T\|_{W_{s,p;s}^{\mathrm{mid}}} \le \pi_{p}(T^*)$.
\end{theorem}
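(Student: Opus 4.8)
The plan is to reduce everything to characterization (vii) of Theorem \ref{teoweak} with $q=r=s$ and constant $B=\pi_p(T^*)$: since that theorem gives $\|T\|_{W_{s,p;s}^{\mathrm{mid}}}=\inf\{B:\text{(vii) holds}\}$, establishing (vii) simultaneously yields $T\in W_{s,p;s}^{\mathrm{mid}}$ and the claimed norm estimate, with no separate verification of the defining condition of $\ell_{s,p}^{\mathrm{mid}}(F)$ being necessary. So I would fix arbitrary $(x_j)_{j=1}^\infty\in\ell_s^w(E)$ and $(y_n^*)_{n=1}^\infty\in\ell_p^w(F^*)$ and set $z_n^*:=T^*(y_n^*)=y_n^*\circ T\in E^*$, so that $y_n^*(T(x_j))=z_n^*(x_j)$ for all $j,n$. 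The whole proof is then a chain of estimates on the double array $(z_n^*(x_j))_{j,n}$.

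The first and decisive step is to interchange the order of the two summation indices. Because $s\in[p,\infty)$, the Minkowski-type inequality underlying \eqref{mink} in Remark \ref{obs2} applies and gives
\begin{equation*}
\left(\sum_{j=1}^\infty\left(\sum_{n=1}^\infty|z_n^*(x_j)|^p\right)^{s/p}\right)^{1/s}\le\left(\sum_{n=1}^\infty\left(\sum_{j=1}^\infty|z_n^*(x_j)|^s\right)^{p/s}\right)^{1/p}.
\end{equation*}
This moves the smaller exponent $p$ to the outer index, which is exactly what will let me invoke the $p$-summing hypothesis afterwards.

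Next I would estimate the two inner sums separately. For the inner sum over $j$, the weak $s$-summability of $(x_j)_{j=1}^\infty$ gives $\sum_{j=1}^\infty|z_n^*(x_j)|^s\le\|z_n^*\|^s\,\|(x_j)_{j=1}^\infty\|_{w,s}^s$ for each $n$. For the remaining sum over $n$, the hypothesis that $T^*$ is absolutely $p$-summing yields $(z_n^*)_{n=1}^\infty=(T^*(y_n^*))_{n=1}^\infty\in\ell_p(E^*)$ with $\left(\sum_{n=1}^\infty\|z_n^*\|^p\right)^{1/p}\le\pi_p(T^*)\,\|(y_n^*)_{n=1}^\infty\|_{w,p}$. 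Substituting the first bound into the right-hand side above and then applying the second produces
\begin{equation*}
\left(\sum_{j=1}^\infty\left(\sum_{n=1}^\infty|y_n^*(T(x_j))|^p\right)^{s/p}\right)^{1/s}\le\pi_p(T^*)\,\|(x_j)_{j=1}^\infty\|_{w,s}\,\|(y_n^*)_{n=1}^\infty\|_{w,p},
\end{equation*}
which is precisely inequality (vii) of Theorem \ref{teoweak} with constant $\pi_p(T^*)$, so the conclusion follows.

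The only genuinely delicate point is getting the direction of the Minkowski interchange right: one must place the smaller exponent $p$ on the outer index after the swap, and this is legitimate exactly because $p\le s$, which is where the restriction $s\in[p,\infty)$ enters in an essential way. Everything else is a routine use of the definitions of the weak $s$-norm and of the $p$-summing norm $\pi_p$; I do not expect to need the chain \eqref{chain} or any prior knowledge that $(T(x_j))_{j=1}^\infty$ lies in $\ell_s^w(F)$, since Theorem \ref{teoweak} has already reduced membership in $W_{s,p;s}^{\mathrm{mid}}$ to the single scalar inequality (vii).
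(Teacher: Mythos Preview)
Your proposal is correct and follows essentially the same route as the paper: rewrite $y_n^*(T(x_j))=T^*(y_n^*)(x_j)$, apply the Minkowski interchange (valid because $p\le s$) to move the exponent $p$ to the outer position, bound the inner $s$-sum by $\|T^*(y_n^*)\|^s\|(x_j)\|_{w,s}^s$, and finish with the $p$-summing estimate for $T^*$. The only cosmetic difference is that the paper truncates both sums to finite ranges $1\le j\le k$, $1\le n\le l$ and lets $k,l\to\infty$ at the end, whereas you work with the full series throughout; neither choice affects the argument.
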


\begin{proof}
	For all $k,l \in \mathbb{N}$ we can calculate
	\begin{align}\label{ref1}
	\left(\sum\limits_{j=1}^k \left(\sum\limits_{n=1}^l \left|y_n^*\left(T\left(x_j\right)\right)\right|^{p}\right)^{s/p}\right)^{1/s} & = \left(\sum\limits_{j=1}^k \left(\sum\limits_{n=1}^l \left| T^*(y_n^*)(x_j)\right|^{p}\right)^{s/p}\right)^{1/s} \nonumber \\
	& \le \left(\sum\limits_{n=1}^l \left(\sum\limits_{j=1}^k \left| T^*(y_n^*)(x_j)\right|^{s}\right)^{p/s}\right)^{1/p} \nonumber \\
	& = \left(\sum\limits_{n=1}^l \|T^*(y_n^*)\|^p\left(\sum\limits_{j=1}^k \left\vert \frac{T^*(y_n^*)}{\|T^*(y_n^*)\|}(x_j)\right\vert^s \right)^{p/s}\right)^{1/p}  \\
	& \le  \left\Vert (T^*(y_n^*))_{n=1}^l\right\Vert_p \left\Vert (x_j)_{j=1}^k\right\Vert_{w,s} \nonumber\\
	& \le \pi_{p}(T^*) \left\Vert (x_j)_{j=1}^k\right\Vert_{w,s} \left\Vert (y_n^*)_{n=1}^l\right\Vert_{w,p}. \nonumber
	\end{align}
	So, if $(x_j)_{j=1}^\infty \in \ell_s^w(E)$ and $(y_n^*)_{n=1}^\infty \in \ell_p^w(F^*)$, taking $k,l \rightarrow \infty$ in (\ref{ref1}), we get what we want to prove.
\end{proof}

The next relationship between the classes of Cohen strongly summing and
weakly mid summing linear operators is obtained as a immediate consequence of Theorem \ref{adjunto}.

\begin{corollary}
	If $1=\frac{1}{p}+ \frac{1}{p^*}$, then $\mathcal{D}_{p^*} \subseteq
	W_{p}^{\mathrm{mid}}$.
\end{corollary}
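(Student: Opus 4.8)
The plan is to reduce the statement to Theorem~\ref{adjunto} by means of the classical duality, due to Cohen, between the ideal $\mathcal{D}_{p^*}$ of Cohen strongly $p^*$-summing operators and the ideal of operators with absolutely summing adjoint. Recall that $T\in\mathcal{L}(E;F)$ is Cohen strongly $p^*$-summing when there is a constant $C>0$ such that
\[
\sum_{j=1}^{n}\left|\left\langle y_j^*,T(x_j)\right\rangle\right|\le C\,\|(x_j)_{j=1}^{n}\|_{p^*}\,\|(y_j^*)_{j=1}^{n}\|_{w,p}
\]
for all $n\in\mathbb{N}$ and all finite families $x_1,\dots,x_n\in E$ and $y_1^*,\dots,y_n^*\in F^*$; note that the second factor carries the index $p$ precisely because $(p^*)^*=p$.

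First I would invoke Cohen's duality theorem, which asserts that $T\in\mathcal{D}_{p^*}(E;F)$ if and only if the adjoint $T^*\colon F^*\to E^*$ is absolutely $p$-summing, with $\pi_p(T^*)=\|T\|_{\mathcal{D}_{p^*}}$; equivalently, $\mathcal{D}_{p^*}=(\Pi_p)^{\mathrm{dual}}$. In particular, every $T\in\mathcal{D}_{p^*}(E;F)$ has $T^*\in\Pi_p(F^*;E^*)$.

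Then I would apply Theorem~\ref{adjunto} with the admissible choice $s=p$ (indeed $p\in[p,\infty)$). This gives $T\in W_{p,p;p}^{\mathrm{mid}}(E;F)$, and the identification \eqref{notamid}, namely $W_{p,p;p}^{\mathrm{mid}}=W_p^{\mathrm{mid}}$, yields $T\in W_p^{\mathrm{mid}}(E;F)$ together with the norm estimate $\|T\|_{W_p^{\mathrm{mid}}}\le\pi_p(T^*)=\|T\|_{\mathcal{D}_{p^*}}$. Hence $\mathcal{D}_{p^*}\subseteq W_p^{\mathrm{mid}}$, as claimed.

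The only nonroutine point, and the one I would be most careful to pin down, is the index bookkeeping in Cohen's theorem: one must check that strong $p^*$-summability of $T$ corresponds to $p$-summability of $T^*$ (rather than $p^*$-summability), since it is exactly the condition $T^*\in\Pi_p$ that triggers the hypothesis of Theorem~\ref{adjunto}. Everything else is a direct substitution into an already-proved result, so no additional estimation is required.
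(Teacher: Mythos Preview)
Your proof is correct and follows exactly the paper's approach: invoke Cohen's duality $\mathcal{D}_{p^*}=(\Pi_p)^{\mathrm{dual}}$ and then apply Theorem~\ref{adjunto} with $s=p$. The additional remarks you make about the norm estimate and the index bookkeeping are accurate and simply flesh out what the paper states in one line.
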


\begin{proof}
	We know (by \cite{cohen73}) that $\left(\Pi_{p}\right)^{\mathrm{dual}} = {\cal D}_{p^*}$. So, just take $s=p$ in the Theorem \ref{adjunto}.
\end{proof}\medskip

We can also characterize the class of all weakly mid $(q,p;r)$-summing linear operators by means of multiple summing operators.

\begin{definition}\rm \cite[Definition 5.1]{gustavo} 
\label{multiplo} A $n$-linear operator $T \in \mathcal{L}%
(E_1,\ldots,E_n;F)$ is multiple $(q_1,\ldots,q_n;p_1,\ldots,p_n)$-summing if
there exists a constant $C>0$ such that
\begin{equation*}
\left(\sum_{j_n=1}^\infty \left( \cdots \left(\sum_{j_1=1}^{\infty}
\left\Vert T\left(x_{j_1}^{(1)},\ldots,
x_{j_n}^{(n)}\right)\right\Vert^{q_1} \right)^\frac{q_2}{q_1}\cdots
\right)^\frac{q_n}{q_{n-1}}\right)^\frac{1}{q_n} \le C \prod_{i=1}^{n}
\left\Vert \left( x_{j_i}^{(i)}\right)_{j_i=1}^\infty \right\Vert_{w,p_i},
\end{equation*}
for all $\left( x_{j_i}^{(i)}\right)_{j_i=1}^\infty \in \ell_{p_i}^w(E_i)$, $%
i=1,\ldots,n$. 
\end{definition}

We denote the class of all multiple $(q_1,\ldots,q_n;p_1,\ldots,p_n)$-summing $n$-linear operators by $\Pi^n_{q_1,\ldots,q_n;p_1,\ldots,p_n}(E_1,\ldots, E_n;F)$.\medskip

Given a linear operator $T \in \mathcal{L}(E;F)$, consider the bilinear
operator
\begin{equation*}
\Phi_T: F^* \times E \rightarrow \mathbb{K}\ \ \mathrm{given\ by}\ \
\Phi_T(y^*,x) = y^*(T(x)).
\end{equation*}
Just observing that
\begin{equation*}
\left(\sum\limits_{j=1}^\infty \left(\sum\limits_{n=1}^\infty
\left|y_n^*\left(T\left(x_j\right)\right)\right|^{p}\right)^{q/p}%
\right)^{1/q} = \left(\sum\limits_{j=1}^\infty
\left(\sum\limits_{n=1}^\infty
|\Phi_T(y_n^*,x_j)|^{p}\right)^{q/p}\right)^{1/q},
\end{equation*}
the Definition \ref{multiplo} together with Theorem \ref{teoweak} (vii) tell us that:

\begin{theorem}\label{caracmult}
A linear operator $T \in \mathcal{L}(E;F)$ is weakly mid $(q,p;r)$-summing if
and only if the bilinear operator $\Phi_T$ is multiple $(p,q;p,r)$-summing.
\end{theorem}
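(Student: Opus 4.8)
The plan is to show that the statement is a direct translation: the inequality defining ``$\Phi_T$ is multiple $(p,q;p,r)$-summing'' becomes, after a relabeling of the two summation indices, \emph{verbatim} condition (vii) of Theorem \ref{teoweak}, so the equivalence reduces entirely to that theorem and no new estimate is required.

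First I would specialize Definition \ref{multiplo} to the bilinear case $n=2$ with the data $E_1 = F^*$, $E_2 = E$, and parameters $q_1 = p$, $q_2 = q$, $p_1 = p$, $p_2 = r$. Writing the two input sequences as $(y_{j_1}^*)_{j_1=1}^\infty \in \ell_p^w(F^*)$ and $(x_{j_2})_{j_2=1}^\infty \in \ell_r^w(E)$, the condition that $\Phi_T$ is multiple $(p,q;p,r)$-summing reads: there is $C>0$ such that
\[
\left(\sum_{j_2=1}^\infty \left(\sum_{j_1=1}^\infty |\Phi_T(y_{j_1}^*, x_{j_2})|^{p}\right)^{q/p}\right)^{1/q} \le C\, \|(y_{j_1}^*)_{j_1=1}^\infty\|_{w,p}\,\|(x_{j_2})_{j_2=1}^\infty\|_{w,r}
\]
for all such sequences. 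Substituting $\Phi_T(y_{j_1}^*, x_{j_2}) = y_{j_1}^*(T(x_{j_2}))$ and renaming the inner index $j_1 \to n$ and the outer index $j_2 \to j$, the left-hand side becomes $\left(\sum_j (\sum_n |y_n^*(T(x_j))|^p)^{q/p}\right)^{1/q}$ and the right-hand side becomes $C\,\|(x_j)_{j=1}^\infty\|_{w,r}\,\|(y_n^*)_{n=1}^\infty\|_{w,p}$, which is precisely item (vii) of Theorem \ref{teoweak} with $B=C$.

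The one place where care is needed, and where the only genuine bookkeeping obstacle lies, is the assignment of roles to the two slots of $\Phi_T$: in Definition \ref{multiplo} the index $j_1$ carries the innermost exponent $q_1$ and the weak norm of order $p_1$ attached to $E_1$. Hence the choice $(p,q;p,r)$ forces the dual slot $F^*$ (carrying the functionals $y_n^*$, the inner sum over $n$, with exponent $p$ and weak-$p$ control) to be $E_1$, and the slot $E$ (carrying the $x_j$, the outer sum over $j$, with exponent $q$ and weak-$r$ control) to be $E_2$. This is exactly how the inner sum over functionals and the outer sum over vectors appear in (vii), so the parameters match on the nose.

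Finally, Theorem \ref{teoweak} asserts that condition (vii) holds if and only if $T \in W_{q,p;r}^{\text{mid}}(E;F)$. Reading this equivalence together with the identification above, and noting that both formulations are quantified over all $(x_j)_{j=1}^\infty \in \ell_r^w(E)$ and all $(y_n^*)_{n=1}^\infty \in \ell_p^w(F^*)$ with the same constant, yields the claim in both directions. Thus the proof is a dictionary between the multiple-summing language and the weakly-mid-summing language, and the result follows immediately from Theorem \ref{teoweak}.
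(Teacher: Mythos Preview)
Your proposal is correct and follows essentially the same approach as the paper: the paper's argument is precisely the observation that specializing Definition~\ref{multiplo} to the bilinear map $\Phi_T$ with parameters $(p,q;p,r)$ yields verbatim condition (vii) of Theorem~\ref{teoweak}. Your write-up is in fact more careful than the paper's, since you explicitly track which slot of $\Phi_T$ corresponds to which pair of exponents, whereas the paper leaves this bookkeeping implicit.
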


As an immediate consequence of above theorem we have the following:

\begin{corollary}\label{coinc}
Let $E$ and $F$ be Banach spaces. For an admissible choice of parameters $q,q_1,p,p_1,r$ and $r_1$:
\begin{description}
	\item[a)] If  $\Pi^{2}_{p,q;p,r}(E,F^*;\mathbb{K})=\mathcal{L}(E,F^*;\mathbb{K})$, then $W_{q,p;r}^{\mathrm{mid}}(E;F)=\mathcal{L}(E;F)$.
	\item[b)] If  $\Pi^{2}_{p,q;p,r}(E,F^*;\mathbb{K})\subseteq\Pi^{2}_{p_1,q_1;p_1,r_1}(E,F^*;\mathbb{K})$, then  $W_{q,p;r}^{\mathrm{mid}}(E;F)\subseteq W_{q_1,p_1;r_1}^{\mathrm{mid}}(E;F)$.
\end{description}	
\end{corollary}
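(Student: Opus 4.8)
The plan is to deduce both statements directly from the equivalence recorded in Theorem \ref{caracmult}, which converts membership in a weakly mid summing class into a multiple summing condition on the associated bilinear form $\Phi_T$. There is no genuine analytic content to add here; the whole argument is a translation through that ``iff''. Accordingly, the first thing I would do is read ``admissible choice of parameters'' as meaning precisely that Theorem \ref{caracmult} is applicable to each of the triples $(q,p;r)$ and $(q_1,p_1;r_1)$ — in particular that the constraints $q \ge r$ and $q_1 \ge r_1$ from Definition \ref{defopF} hold, so that the weakly mid summing classes in play are the intended (nontrivial) ones and the corresponding multiple summing classes are well defined.

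For part (a), I would start from an arbitrary $T \in \mathcal{L}(E;F)$ and note that $\Phi_T$ is automatically bounded, so $\Phi_T$ belongs to the space of continuous bilinear forms appearing in the hypothesis. The assumption that every such form is multiple $(p,q;p,r)$-summing then applies in particular to $\Phi_T$, and Theorem \ref{caracmult} immediately returns $T \in W_{q,p;r}^{\mathrm{mid}}(E;F)$. Since $T$ was arbitrary and the reverse inclusion $W_{q,p;r}^{\mathrm{mid}}(E;F) \subseteq \mathcal{L}(E;F)$ is trivial, this yields the desired equality. I would emphasise that only the behaviour of the particular forms $\Phi_T$ is used, so the fact that $T \mapsto \Phi_T$ need not exhaust all bilinear forms (its range consists of those forms whose induced operator $E \to F^{**}$ actually takes values in $F$) causes no trouble: we are invoking the hypothesis, not trying to match it exactly.

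For part (b), I would take $T \in W_{q,p;r}^{\mathrm{mid}}(E;F)$ and run Theorem \ref{caracmult} twice. The forward direction gives $\Phi_T \in \Pi^2_{p,q;p,r}(E,F^*;\mathbb{K})$; the assumed inclusion then places $\Phi_T$ in $\Pi^2_{p_1,q_1;p_1,r_1}(E,F^*;\mathbb{K})$; and a second application of Theorem \ref{caracmult}, now read with the parameters $(q_1,p_1;r_1)$, converts this back into $T \in W_{q_1,p_1;r_1}^{\mathrm{mid}}(E;F)$. Again only the forms $\Phi_T$ are needed, so the argument is a clean composition of the two directions of the equivalence.

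The step I expect to require the most care is the parameter and variable bookkeeping rather than anything mathematical. In its definition $\Phi_T$ is written on $F^*\times E$, while the multiple summing classes are displayed over $(E,F^*)$, so I would fix once and for all the dictionary coming from the identity displayed just before Theorem \ref{caracmult}: the inner summation index $n$ (ranging over the $F^*$-sequence $(y_n^*)$) carries exponent $p$ and weak-norm index $p$, whereas the outer index $j$ (ranging over the $E$-sequence $(x_j)$) carries exponent $q$ and weak-norm index $r$. Under this dictionary ``$T$ weakly mid $(q,p;r)$-summing'' is literally ``$\Phi_T$ multiple $(p,q;p,r)$-summing'', and with the nesting order held fixed the translations in (a) and (b) become purely formal, needing no estimate or measure-theoretic construction.
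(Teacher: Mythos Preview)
Your proposal is correct and follows exactly the approach the paper intends: the corollary is stated there as ``an immediate consequence'' of Theorem \ref{caracmult}, with no separate proof, and you simply unpack that immediate consequence by applying the equivalence once (for part (a)) or twice (for part (b)). Your careful remark about the variable/parameter bookkeeping between $\Phi_T\colon F^*\times E\to\mathbb{K}$ and the notation $\Pi^2_{p,q;p,r}(E,F^*;\mathbb{K})$ is well taken and is indeed the only place where any care is needed.
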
\medskip

From the above result it is expected that the research on multiple summing operators provide a variety of inclusion and coincidence theorems for the classes of weakly mid $(q,p;r)$-summing linear operators. We present some of these results in the next corollary whose proof lies on the inclusions and coincidence results of the multiple summing operators together with our Corollary \ref{coinc}. In various items is necessary remember the notations given by \eqref{notamid}.

\begin{corollary}\label{inclusandcoin}  
Let $E,F$ be Banach spaces.
\begin{description}
	\item[(i)] If $1\leq p\leq q<2$, then $W^{\mathrm{mid}}_{p}(E;F)\subset W^{\mathrm{mid}}_{q}(E;F).$
		
	\item[(ii)]  If $1\leq p,q<2$ and if $E$ and $F^*$ have cotype $2$, then
		$W^{\mathrm{mid}}_{p}(E;F)=W^{\mathrm{mid}}_{q}(E;F)$.
		
	\item[(iii)] If $1\leq p\leq q\leq\infty$ and if $E$ and  $F^*$ are $\mathcal{L}_{\infty}$-spaces, then $W^{\mathrm{mid}}_{p}(E;F)\subset W^{\mathrm{mid}}_{q}(E;F)$.
		
	\item[(iv)] If $1\leq p\leq2,$ then $W^{\mathrm{mid}}_{p}(\ell_{1};c_0)=\mathcal{L}(\ell_{1};c_0).$ 
		
\end{description}	
\end{corollary}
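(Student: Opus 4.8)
The plan is to derive each item of Corollary \ref{inclusandcoin} by reducing the statement about weakly mid summing linear operators to a corresponding statement about multiple summing bilinear operators, via the characterization $T \in W_{q,p;r}^{\mathrm{mid}}(E;F)$ if and only if $\Phi_T \in \Pi^2_{p,q;p,r}(E,F^*;\mathbb{K})$ from Theorem \ref{caracmult}, and then invoking Corollary \ref{coinc}. Thus each item becomes: find the known inclusion or coincidence theorem for $\Pi^2$ that, when fed into Corollary \ref{coinc}, produces the asserted relation among the $W^{\mathrm{mid}}$ classes. First I would rewrite every $W^{\mathrm{mid}}_p$ in its three-index form using \eqref{notamid}, so that $W^{\mathrm{mid}}_p = W_{p,p;p}^{\mathrm{mid}}$ corresponds to $\Pi^2_{p,p;p,p}$; this makes the matching of parameters in Corollary \ref{coinc} transparent.

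For item (i) I would use the inclusion theorem for multiple summing operators asserting $\Pi^2_{p,p;p,p} \subseteq \Pi^2_{q,q;q,q}$ when $1\le p\le q<2$ (the relevant inclusion being valid in this range because the scalar-valued bilinear forms are automatically multiple $2$-summing, so one can trade down the summability exponent), and then apply Corollary \ref{coinc}(b) with the parameter identifications $q_1=p_1=r_1=q$. For item (ii), under the cotype $2$ hypotheses on $E$ and $F^*$, I would invoke the coincidence result that all the relevant classes $\Pi^2_{p,p;p,p}$ coincide with $\mathcal{L}(E,F^*;\mathbb{K})$ for $1\le p<2$ (this is where cotype $2$ enters, through the Grothendieck/Bennett–Carl type theorems for bilinear forms on cotype $2$ spaces), giving equality in both directions via Corollary \ref{coinc}(b). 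Item (iii) would follow from the inclusion theorem for multiple summing forms on $\mathcal{L}_\infty$-spaces, and item (iv) from the coincidence $\Pi^2_{\bullet}(\ell_1, \ell_1; \mathbb{K}) = \mathcal{L}(\ell_1,\ell_1;\mathbb{K})$ (noting $c_0^* = \ell_1$) for the relevant parameters, applied through Corollary \ref{coinc}(a).

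The main obstacle, and the part requiring genuine care rather than routine bookkeeping, is ensuring that the parameter translation is correct: Theorem \ref{caracmult} sends $W_{q,p;r}^{\mathrm{mid}}$ to $\Pi^2_{p,q;p,r}$, with the \emph{inner} summation exponent $p$ appearing in the first slot of both the outer-exponent pair and the weak-summability pair, and the order of the two factors $F^*$ and $E$ fixed. Consequently, for each item I must locate a multiple-summing theorem stated for exactly this asymmetric arrangement of indices (first factor governed by exponents $(p,p)$, second by $(q,r)$), and verify that the cited inclusion or coincidence genuinely applies to bilinear \emph{scalar-valued} forms on the product $E \times F^*$ with these specific parameters. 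I would therefore, for each of (i)--(iv), state explicitly which multiple-summing result is being invoked and check that its hypotheses (cotype, $\mathcal{L}_\infty$ structure, or the identity of the domain spaces) match the spaces $E$ and $F^*$ after this translation; once the correct $\Pi^2$ statement is identified, Corollary \ref{coinc} discharges the conclusion immediately.
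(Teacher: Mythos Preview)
Your proposal is correct and follows essentially the same approach as the paper: reduce each item, via Theorem \ref{caracmult} and Corollary \ref{coinc}, to a known inclusion or coincidence theorem for multiple summing bilinear forms, and then cite the appropriate result from the literature. The paper's own proof is simply a list of citations (P\'erez-Garc\'ia for (i), Botelho--Michels--Pellegrino for (ii) and (iii), P\'erez-Garc\'ia--Villanueva for (iv)), so your more detailed account of the parameter matching and of which multiple-summing theorem is needed in each case is, if anything, an expansion of the same argument rather than a different one; the only minor point to watch is that in (ii) the cited interpolation result gives equality among the $\Pi^2_{p,p;p,p}$ classes for $1\le p<2$, not necessarily coincidence with all of $\mathcal{L}(E,F^*;\mathbb{K})$, but this weaker statement is exactly what Corollary \ref{coinc}(b) requires.
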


\begin{proof}
	The proof of (i) follows from \cite[Theorem 3.4]{davidstudia}, (ii) and (iii) are consequences of Theorems 4.6 and 4.9 in \cite{michels} and (iv) stems from \cite[Theorem 3.4]{jmaa}.
\end{proof}

The next proposition establishes a more general inclusion relation between classes of weakly mid $(q,p;r)$-summing operators, with a certain freedom of parameters not presented in previous results. Here we use again the inclusion theorem for absolutely summing operators \cite[Theorem 10.4]{djt}.

\begin{proposition}
\label{chave} If $1 \leq p, q, r, s, t<\infty$, with $r \le q$, $s \le t$ and $\dfrac{1}{r}-\dfrac{1}{q}%
\leq \dfrac{1}{s}-\dfrac{1}{t}$, then
\begin{equation*}
W_{q,p;r}^{\mathrm{mid}} \overset{1}{\hookrightarrow} W_{t,p;s}^{\mathrm{mid}}.
\end{equation*}
\end{proposition}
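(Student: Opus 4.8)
The plan is to reduce the whole statement to a single application of the inclusion theorem for absolutely summing operators \cite[Theorem 10.4]{djt}, by reading membership in both classes through the scalar characterization of Theorem \ref{teoweak}(vii). The guiding observation is that in passing from $W_{q,p;r}^{\mathrm{mid}}$ to $W_{t,p;s}^{\mathrm{mid}}$ the inner index $p$ and the factor $\|(y_n^*)_{n=1}^\infty\|_{w,p}$ are mere spectators: only the outer exponent ($q\rightsquigarrow t$) and the weak index on the $x_j$'s ($r\rightsquigarrow s$) actually change. So everything should come down to upgrading the summing parameters of a fixed linear operator into $\ell_p$.

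Concretely, I would fix $T\in W_{q,p;r}^{\mathrm{mid}}(E;F)$, set $B:=\|T\|_{W_{q,p;r}^{\mathrm{mid}}}$, and fix an arbitrary $(y_n^*)_{n=1}^\infty\in\ell_p^w(F^*)$. By the linear stability of $\ell_p^w(\cdot)$ (used already in the proof of Proposition \ref{seqclass}) the sequence $z^*:=(y_n^*\circ T)_{n=1}^\infty=(T^*(y_n^*))_{n=1}^\infty$ belongs to $\ell_p^w(E^*)$, so via the identification $\ell_p^w(E^*)=\mathcal{L}(E;\ell_p)$ recalled at the start of Section~\ref{operatorsec} the operator $T_{z^*}\in\mathcal{L}(E;\ell_p)$, $T_{z^*}(x)=(y_n^*(T(x)))_{n=1}^\infty$, is well defined and bounded. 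The key identity is
\[
\left\|(T_{z^*}(x_j))_{j=1}^k\right\|_{\ell_q(\ell_p)}=\left(\sum_{j=1}^k\left(\sum_{n=1}^\infty|y_n^*(T(x_j))|^p\right)^{q/p}\right)^{1/q},
\]
so that, since $r\le q$, Theorem \ref{teoweak}(vii) says precisely that $T_{z^*}$ is absolutely $(q,r)$-summing with $\pi_{q,r}(T_{z^*})\le B\,\|(y_n^*)_{n=1}^\infty\|_{w,p}$.

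Now I would invoke \cite[Theorem 10.4]{djt} to move from the summing parameters $(q,r)$ to $(t,s)$: the hypotheses $s\le t$ and the defect inequality $\tfrac1r-\tfrac1q\le\tfrac1s-\tfrac1t$ are exactly what is needed (together with the outer-index monotonicity) to conclude $T_{z^*}\in\Pi_{t,s}(E;\ell_p)$ with $\pi_{t,s}(T_{z^*})\le\pi_{q,r}(T_{z^*})\le B\,\|(y_n^*)_{n=1}^\infty\|_{w,p}$. Unwinding this summing norm and letting $k\to\infty$ gives
\[
\left(\sum_{j=1}^\infty\left(\sum_{n=1}^\infty|y_n^*(T(x_j))|^p\right)^{t/p}\right)^{1/t}\le B\,\|(x_j)_{j=1}^\infty\|_{w,s}\,\|(y_n^*)_{n=1}^\infty\|_{w,p}
\]
for every $(x_j)_{j=1}^\infty\in\ell_s^w(E)$. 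Since $(y_n^*)_{n=1}^\infty$ was arbitrary, this is exactly condition (vii) of Theorem \ref{teoweak} for $W_{t,p;s}^{\mathrm{mid}}$, yielding both $T\in W_{t,p;s}^{\mathrm{mid}}$ and $\|T\|_{W_{t,p;s}^{\mathrm{mid}}}\le B$, i.e. the isometric embedding.

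The step I expect to be the main obstacle is the bookkeeping of the indices: one must verify carefully that the stated relations among $p,q,r,s,t$ translate into precisely the hypotheses of \cite[Theorem 10.4]{djt} for the transition $(q,r)\rightsquigarrow(t,s)$ — in particular that the outer-index condition needed for the inclusion theorem is genuinely supplied alongside the defect inequality. The remaining ingredients (well-definedness of $T_{z^*}$, the uniformity of the constant $B$ over all choices of $(y_n^*)_{n=1}^\infty$, and the passage $k\to\infty$) are routine once the characterization in Theorem \ref{teoweak}(vii) is in place.
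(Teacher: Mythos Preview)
Your proposal is correct and follows essentially the same route as the paper: your auxiliary operator $T_{z^*}(x)=(y_n^*(T(x)))_{n=1}^\infty$ is precisely the paper's $R_{y^*,T}$, and both arguments use Theorem~\ref{teoweak} to see it is $(q,r)$-summing, invoke the inclusion theorem \cite[Theorem 10.4]{djt} to pass to $(t,s)$-summing, and read off the conclusion via Theorem~\ref{teoweak} again. Your caveat about the index bookkeeping for the inclusion theorem is well taken---the paper simply asserts the passage ``by the hypothesis over our parameters''---but the overall strategy is identical.
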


\begin{proof}
Let $y^* = \left( y^*_{n}\right)_{n=1}^{\infty}\in\ell_{p}^{w}(F^{*})$, $T \in W_{q,p;r}^{\text{mid}}(E;F)$ and consider the operator
\[ R_{y^*,T}:E \rightarrow \ell_{p}\ \ \mathrm{given\ by\ \ } R_{y^*,T}(x)=\left( y_n^{*}(T(x)) \right)_{n=1}^{\infty}.\]
By (\ref{truquenorma}) we obtain that $R_{y^*,T}$ is well defined and, of course, this operator belongs to  $\mathcal{L}(E;\ell_{p})$, with $\|R_{y^*,T}\|\le \|T\| \|y^*\|_{w,p}$.
Since $T$ is weakly mid $(q,p;r)$-summing, the Theorem \ref{teoweak} asserts that $R_{y^*,T}$ is absolutely $\left(q;r\right)$-summing and, by the hypothesis over our parameters, it is absolutely $(t,s)$-summing. We also have

\begin{equation*}
\pi _{(t,s)}(R_{y^*,T})\leq \pi _{(q;r)}(R_{y^*,T})\leq \|T\|_{W_{q,p;r}^{\text{mid}}}\left\Vert \left(y^*_{n}\right)_{n=1}^{\infty }\right\Vert _{w,p}
\end{equation*}
and thus	
\begin{align*}
\left(\sum_{j=1}^\infty \left(\sum_{n=1}^\infty |y_{n}^*(T(x_{j}))|^{p}\right)^{{t}/{p}}\right)^{1/{t}} & \leq \pi
_{(t;s)}(R_{y^*,T})\left\Vert \left( x_{j}\right)
_{j=1}^{\infty }\right\Vert _{w,s}\\
& \le \|T\|_{W_{q,p;r}^{\text{mid}}}\left\Vert \left(x_{j}\right) _{j=1}^{\infty }\right\Vert_{w,s} \left\Vert\left( y_{n}^*\right)
_{n=1}^{\infty }\right\Vert_{w,p},
\end{align*}
concluding our proof.
\end{proof}

We finish this paper with two results: a Pietsch Domination-type Theorem for weakly mid summing operators, obtained from Theorem \ref{caracter}, and a corollary that gives us more accurate inclusion information. The first, an important result in itself, presents an integral characterization of weakly mid summing operators and the second uses this representation to settle definitely the inclusion issue partially solved in Corollary \ref{inclusandcoin} (i).

\begin{theorem}\label{teopietsch}
Let $T \in \mathcal{L}(E;F)$. The following statements are equivalent:

\begin{description}
\item[a)] $T$ is weakly mid $p$-summing.

\item[b)] There are a constant $C>0$ and
a regular Borel probability measure $\mu$ on $B_{F^{^{** }}}$
such that
\begin{equation}  \label{777}
\left( \sum_{j=1}^{\infty}\vert x^*(T(x_j)) \vert ^{p}\right) ^{\frac{1 }{p}%
} \leq C \cdot \left(\int_{B_{F^{^{** }}}} \vert \varphi(x^*)
\vert^{p} d\mu(\varphi) \right)^{\frac{1}{p}},
\end{equation}
for all $x^*\in B_{F^{^{* }}}$ and all $(x_{j})_{j=1}^\infty \in \ell_p^w(E)$.

\item[c)] There are a constant $C>0$ and
a regular Borel probability measure $\mu$ on $B_{F^{^{** }}}$
such that, for every $k\in \mathbb{N}$,
\begin{equation}  \label{778}
\left( \sum_{j=1}^{k}\vert x^*(T(x_j)) \vert ^{p}\right) ^{\frac{1 }{p}%
} \leq C \cdot \left(\int_{B_{F^{^{** }}}} \vert \varphi(x^*)
\vert^{p} d\mu(\varphi) \right)^{\frac{1}{p}},
\end{equation}
for all $x^*\in B_{F^{^{* }}}$ and all $x_{1},..., x_{k} \in E$.
\end{description}
\end{theorem}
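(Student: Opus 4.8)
The plan is to establish the cycle a) $\Rightarrow$ c) $\Rightarrow$ b) $\Rightarrow$ a), with the genuine content concentrated in producing a \emph{single} measure $\mu$ and constant $C$ that work for all finite tuples $x_1,\dots,x_k \in E$ and all $x^* \in B_{F^*}$ at once. The crucial observation is that weakly mid $p$-summing means $T \in W_{p,p;p}^{\mathrm{mid}}$, so by Theorem~\ref{teoweak}(v) there is a constant $B>0$ with
\begin{equation*}
\left\|(T(x_j))_{j=1}^k\right\|_{p,p} \le B \left\|(x_j)_{j=1}^k\right\|_{w,p}
\end{equation*}
for all finite tuples. The key device linking this to an integral bound is Theorem~\ref{caracter}: applied with $q=p$, a sequence $(y_j)_{j} \in \ell_p^w(F)$ lies in $\ell_p^{\mathrm{mid}}(F)$ precisely when the associated operator $\Psi_{(y_j)} : F^* \to \ell_p$ is absolutely $p$-summing, with equality of the relevant norms. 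I would apply this with $y_j = T(x_j)$.

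First I would fix the ambient space as the bidual. For a finite tuple $x_1,\dots,x_k$, the finite sequence $(T(x_j))_{j=1}^k$ sits in $\ell_p^w(F) \overset{1}{\hookrightarrow} \ell_p^w(F^{**})$, and the operator $\Psi : F^* \to \ell_p$, $\Psi(x^*) = (x^*(T(x_j)))_{j=1}^k$, extends to $F^{**}$ via $\Psi(\varphi) = (\varphi(T(x_j)))_{j}$ using that each $T(x_j) \in F \subseteq F^{**}$ acts as an evaluation. The point of passing to $F^{**}$ is that we want a Pietsch measure on the weak-star compact ball $B_{F^{**}}$ of the dual of $F^*$; this is exactly the natural domain for the Pietsch Domination Theorem \eqref{p111} applied to the $p$-summing operator $\Psi$ defined on $F^*$. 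By Theorem~\ref{caracter} the bound above gives $\pi_p(\Psi) \le B$, uniformly in $k$ and in the tuple. The classical Pietsch factorization \eqref{p111} then yields, for each tuple, a probability measure $\mu$ on $B_{F^{**}}$ with $\|\Psi(x^*)\| \le C \big(\int_{B_{F^{**}}} |\varphi(x^*)|^p \, d\mu(\varphi)\big)^{1/p}$ for all $x^* \in F^*$, which written out is exactly \eqref{778}.

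The main obstacle is precisely the \emph{universality} of $(C,\mu)$: the naive application of Pietsch produces a measure depending on the tuple. To get a single $\mu$ I would run the standard Ky Fan / separation argument directly rather than invoke \eqref{p111} as a black box. Consider the set $\mathcal{F}$ of continuous functions on the weak-star compact convex set $P(B_{F^{**}})$ of probability measures, indexed by finite tuples $x_1,\dots,x_k \in E$ and scalars, of the form
\begin{equation*}
f_{(x_j),x^*}(\mu) = \sum_{j=1}^k |x^*(T(x_j))|^p - C^p \int_{B_{F^{**}}} |\varphi(x^*)|^p \, d\mu(\varphi),
\end{equation*}
with $C = B$ fixed. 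Taking the supremum over $x^* \in B_{F^*}$ of the first term against the companion integral, the uniform bound from Theorem~\ref{teoweak}(v) combined with Theorem~\ref{caracter} shows each such function, suitably normalized, has nonpositive value for some $\mu$; the family $\mathcal{F}$ is concave in $\mu$ (the integral is linear) and the index set can be made convex by closing under the operation of concatenating tuples, so Ky Fan's lemma produces one $\mu$ making every $f \le 0$ simultaneously. This delivers c) with a universal pair. Then c) $\Rightarrow$ b) is a routine passage $k \to \infty$ using monotone convergence on the left and the hypothesis $(x_j) \in \ell_p^w(E)$, and b) $\Rightarrow$ a) reverses the chain: the integral bound with $\mu$ a probability measure gives, after integrating the defining inequality and reordering the finite sums, the estimate of Theorem~\ref{teoweak}(vii), hence $T \in W_p^{\mathrm{mid}}$.
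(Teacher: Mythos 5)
Your outer steps are fine --- b) $\Leftrightarrow$ c) by padding with zeros and passing to the supremum in $k$, and b) $\Rightarrow$ a) by summing \eqref{777} over a weakly $p$-summable sequence of functionals --- but the step you yourself single out as the ``genuine content'', namely producing one pair $(C,\mu)$ valid for \emph{all} finite tuples simultaneously, is not merely missing an argument: it is false, so no Ky Fan (or other) argument can deliver it. There are two obstructions. The superficial one is homogeneity: \eqref{778} carries no factor $\|(x_j)_{j=1}^k\|_{w,p}$ on the right-hand side, so a tuple-independent $(C,\mu)$ would force $T=0$ (replace $x_j$ by $\lambda x_j$ and let $\lambda\to\infty$). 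The essential one survives the normalization your sketch alludes to: restricting to tuples with $\|(x_j)_{j=1}^k\|_{w,p}\le 1$ and using that $\sup\{(\sum_{j}|z^*(x_j)|^p)^{1/p}:\|(x_j)\|_{w,p}\le 1\}=\|z^*\|$ applied to $z^*=T^*(x^*)$, a universal pair $(C,\mu)$ exists if and only if $\|T^*(x^*)\|\le C\left(\int_{B_{F^{**}}}|\varphi(x^*)|^p\,d\mu\right)^{1/p}$ for all $x^*\in F^*$, i.e.\ if and only if $T^*\in\Pi_p(F^*;E^*)$. That is strictly stronger than a): take $E=F=\ell_1$, $p=2$, $T=\mathrm{id}_{\ell_1}$. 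For any $(y_n^*)_{n=1}^\infty\in\ell_2^w(\ell_1^*)$ the operator $u(x)=(y_n^*(x))_{n=1}^\infty$ maps $\ell_1$ boundedly into $\ell_2$, hence is $2$-summing by Grothendieck's theorem, so $\sum_j\sum_n|y_n^*(x_j)|^2<\infty$ for every $(x_j)_{j=1}^\infty\in\ell_2^w(\ell_1)$; thus $\mathrm{id}_{\ell_1}$ is weakly mid $2$-summing, while $(\mathrm{id}_{\ell_1})^*=\mathrm{id}_{\ell_\infty}$ is not $2$-summing (Dvoretzky--Rogers). Concretely, your Ky Fan scheme breaks exactly where you assert nonpositive values: Theorem \ref{teoweak}(v) couples \emph{one} tuple with all functionals, whereas nonpositivity for convex combinations of your functions $f_{(x_j),x^*}$ over different $x^*$'s demands the mixed estimate $\sum_i\lambda_i\sum_j|x_i^*(T(x_{i,j}))|^p\le C^p\sup_{\varphi\in B_{F^{**}}}\sum_i\lambda_i|\varphi(x_i^*)|^p$ with the tuple varying with $i$, and that is precisely the false universal statement above.

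The theorem's wording is admittedly ambiguous about quantifiers, but the paper's own proof fixes the intended reading: $(C,\mu)$ is allowed to depend on the sequence $(x_j)_{j=1}^\infty\in\ell_p^w(E)$; only independence of $x^*$ matters. With that reading no minimax argument is needed, and the proof is just the first half of your own machinery: given $(x_j)_{j=1}^\infty\in\ell_p^w(E)$, statement a) puts $y=(T(x_j))_{j=1}^\infty$ in $\ell_p^{\mathrm{mid}}(F)$, Theorem \ref{caracter} (case $q=p$) makes $\Psi_y\in\Pi_p(F^*;\ell_p)$, and \eqref{p111} applied to $\Psi_y$ --- whose domain is $F^*$, so the Pietsch measure automatically lives on $B_{F^{**}}$, no extension to the bidual being necessary --- yields \eqref{777} for that sequence; conversely, \eqref{777} makes $\Psi_y$ $p$-summing by domination, and Theorem \ref{caracter} returns $y\in\ell_p^{\mathrm{mid}}(F)$, which is a). Note that this converse, like yours, never uses universality of $(C,\mu)$. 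So the repair is not to strengthen the argument but to weaken the target: prove the sequence-dependent statement, which is what the paper does.
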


\begin{proof} a) $\Rightarrow$ b) If $(x_{j})_{j=1}^\infty \in
\ell_p^w(E)$, we have $y=(T(x_{j}))_{j=1}^\infty \in \ell_p^{\text{mid}}(F)$ and Theorem \ref{caracter} ensures that $\Psi_y:F^{*}\rightarrow \ell_{p}$ is $p$-summing. Therefore, there are a constant $C>0$
and a regular Borel probability measure $\mu$ on $B_{F^{**}}$
such that
\begin{equation*}
\left\Vert \Psi_y(x^*) \right\Vert_p  \leq C \cdot
\left(\int_{B_{F^{**}}} \vert \varphi(x^*) \vert^{p}
d\mu(\varphi) \right)^{\frac{1}{p}},
\end{equation*}
for all $x^*\in B_{F^{*}}$, i.e.,
\begin{equation*}
\left( \sum_{j=1}^{\infty}\vert x^*(T(x_j)) \vert ^{p}\right) ^{\frac{1
}{p}} \leq C \cdot \left(\int_{B_{F^{**}}} \vert
\varphi(x^*) \vert^{p} d\mu(\varphi) \right)^{\frac{1}{p}},
\end{equation*}
for all $x^*\in B_{F^{*}}$.

\noindent b) $\Rightarrow$ a) Still considering $y=(T(x_{j}))_{j=1}^\infty$, if $(x_{j})_{j=1}^\infty \in \ell_p^w(E)$, by (\ref{777}) we obtain that
$(x^*(T(x_{j})))_{j=1}^\infty \in \ell_{p}$, for all $x^* \in
B_{F^{'}}$. Thus $\Psi_y:F^{*} \rightarrow \ell_{p}$ is $p$-summing and, using Theorem \ref{caracter} again, we conclude that $(T(x_{j}))_{j=1}^\infty \in \ell_p^{\text{mid}}(F)$.

\noindent b) $\Leftrightarrow$ c) It is immediate.
\end{proof}

\begin{corollary}
	If $1\le p\le q<\infty$, then $	W_{p}^{\mathrm{mid}} \subseteq
	W_{q}^{\mathrm{mid}}$.
\end{corollary}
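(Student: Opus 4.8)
The plan is to recast both membership conditions as inclusions between classes of multiple summing bilinear forms and then to run a Pietsch-domination argument. By Theorem \ref{caracmult}, together with the equivalence (i)$\Leftrightarrow$(vii) of Theorem \ref{teoweak}, the assertion $T\in W_p^{\mathrm{mid}}(E;F)=W_{p,p;p}^{\mathrm{mid}}(E;F)$ means exactly that the bilinear form $\Phi_T(y^*,x)=y^*(T(x))$ is multiple $(p,p;p,p)$-summing, while $T\in W_q^{\mathrm{mid}}(E;F)$ means that $\Phi_T$ is multiple $(q,q;q,q)$-summing. Thus everything reduces to the implication
\[
\Bigl(\sum_{j,n}|y_n^*(T(x_j))|^{p}\Bigr)^{1/p}\le C\,\|(x_j)\|_{w,p}\|(y_n^*)\|_{w,p}
\ \Longrightarrow\
\Bigl(\sum_{j,n}|y_n^*(T(x_j))|^{q}\Bigr)^{1/q}\le C\,\|(x_j)\|_{w,q}\|(y_n^*)\|_{w,q}
\]
for $p\le q$, the hypothesis being the content of $T\in W_p^{\mathrm{mid}}$ and the conclusion that of $T\in W_q^{\mathrm{mid}}$.

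First I would promote Theorem \ref{teopietsch} to a two-sided, separated domination. Since $\Phi_T$ is multiple $(p,p;p,p)$-summing, applying the reasoning behind Theorem \ref{teopietsch} to each of its two variables (this is the Pietsch domination theorem for multiple summing bilinear operators) yields a constant $C>0$ and regular Borel probability measures $\mu_1$ on $B_{E^*}$ and $\mu_2$ on $B_{F^{**}}$ such that
\[
|y^*(T(x))|\le C\Bigl(\int_{B_{E^*}}|\psi(x)|^{p}\,d\mu_1(\psi)\Bigr)^{1/p}\Bigl(\int_{B_{F^{**}}}|\varphi(y^*)|^{p}\,d\mu_2(\varphi)\Bigr)^{1/p}
\]
for all $x\in E$ and $y^*\in F^*$; this is the symmetric companion of the one-variable estimate of Theorem \ref{teopietsch}, now carrying one measure per variable.

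With this at hand, the passage from $p$ to $q$ is the convexity mechanism that already drives the classical inclusion theorem $\Pi_p\subseteq\Pi_q$. I would raise the displayed domination to the power $q$, sum over $j$ and $n$, and use that the resulting double sum factors into a sum over $j$ times a sum over $n$. On each factor I apply Minkowski's integral inequality (legitimate because $q/p\ge 1$): writing $a_j=\int_{B_{E^*}}|\psi(x_j)|^{p}d\mu_1$, it gives $\bigl(\sum_j a_j^{q/p}\bigr)^{p/q}\le\int_{B_{E^*}}\bigl(\sum_j|\psi(x_j)|^{q}\bigr)^{p/q}d\mu_1\le\|(x_j)\|_{w,q}^{p}$, the last bound using $\sum_j|\psi(x_j)|^{q}\le\|(x_j)\|_{w,q}^{q}$ for $\psi\in B_{E^*}$; hence $\sum_j a_j^{q/p}\le\|(x_j)\|_{w,q}^{q}$, and symmetrically for the $\mu_2$-factor. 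Multiplying the two factors reproduces the right-hand inequality above with the same constant $C$, so $T\in W_q^{\mathrm{mid}}$ and $\|T\|_{W_q^{\mathrm{mid}}}\le\|T\|_{W_p^{\mathrm{mid}}}$.

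I expect the main obstacle to be obtaining the estimate in the two weak norms $\|\cdot\|_{w,q}$ at the same time. A single use of the one-sided domination of Theorem \ref{teopietsch} (equivalently, one application of $\Pi_p\subseteq\Pi_q$ to the operator $\Psi_{(T(x_j))}$ of Theorem \ref{caracter}) upgrades only the functional variable and leaves the factor $\|(x_j)\|_{w,p}$, which is infinite for sequences in $\ell_q^w(E)\setminus\ell_p^w(E)$; and upgrading the two variables one after the other fails, because the change in the order of summation needed in between would call for Minkowski's inequality (the estimate (\ref{mink}) of Remark \ref{obs2}) in the forbidden direction. It is precisely the separated, product form of the domination that decouples the variables and lets the convexity step act on each independently. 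Since the argument uses only $q/p\ge 1$, with no upper bound on $q$, this is what removes the restriction $q<2$ present in Corollary \ref{inclusandcoin}(i).
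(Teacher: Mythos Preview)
Your argument has a genuine gap at the two-sided domination step. The pointwise inequality
\[
|y^*(T(x))|\le C\Bigl(\int_{B_{E^*}}|\psi(x)|^{p}\,d\mu_1(\psi)\Bigr)^{1/p}\Bigl(\int_{B_{F^{**}}}|\varphi(y^*)|^{p}\,d\mu_2(\varphi)\Bigr)^{1/p}
\]
with \emph{universal} probability measures $\mu_1,\mu_2$ is the Pietsch characterization of $\Phi_T$ being $p$-\emph{dominated} in the multilinear sense, not of $\Phi_T$ being multiple $(p,p;p,p)$-summing; the $p$-dominated bilinear forms are in general a proper subclass of the multiple $p$-summing ones, so this domination does not follow from $T\in W_p^{\mathrm{mid}}$. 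Concretely, running the reasoning of Theorem~\ref{teopietsch} in each variable separately gives, for every fixed $(y_n^*)\in\ell_p^w(F^*)$, a Pietsch measure on $B_{E^*}$ for the $p$-summing operator $x\mapsto(y_n^*(Tx))_n$, and symmetrically a measure on $B_{F^{**}}$ for each fixed $(x_j)$---but each measure depends on the data in the other slot, and nothing in your argument produces a single pair $(\mu_1,\mu_2)$ that works simultaneously. Were the step valid, it would prove $\Pi^2_{p,p;p,p}\subseteq\Pi^2_{q,q;q,q}$ for \emph{arbitrary} bilinear forms and all $p\le q$, which goes well beyond what is available (compare the restriction $q<2$ in Corollary~\ref{inclusandcoin}(i), taken from \cite{davidstudia}).

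The paper's route sidesteps exactly the obstacle you flag in your last paragraph by using the one-sided domination of Theorem~\ref{teopietsch} as stated: inequality~(\ref{778}) carries the $x_j$'s only on the left-hand side, with no factor $\|(x_j)\|_{w,p}$ on the right at all. The passage from $p$ to $q$ is then just the finite-sum inclusion $\bigl(\sum_{j=1}^k a_j^q\bigr)^{1/q}\le\bigl(\sum_{j=1}^k a_j^p\bigr)^{1/p}$ on the left and the probability-measure inclusion $\|\cdot\|_{L_p(\mu)}\le\|\cdot\|_{L_q(\mu)}$ on the right, giving condition~(c) of Theorem~\ref{teopietsch} with $q$ in place of $p$, and hence $T\in W_q^{\mathrm{mid}}$.
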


\begin{proof} Let $E$ and $F$ be Banach spaces and $T \in W_{p}^{\mathrm{mid}}(E;F)$. Using a) $\Rightarrow$ c) of the Theorem \ref{teopietsch}, there are a constant $C>0$ and	a regular Borel probability measure $\mu$ on $B_{F^{^{** }}}$ such that, for every $k\in \mathbb{N}$,
	\begin{equation*}  
	\left( \sum_{j=1}^{k}\vert x^*(T(x_j)) \vert ^{p}\right) ^{\frac{1 }{p}%
	} \leq C \cdot \left(\int_{B_{F^{^{** }}}} \vert \varphi(x^*)
	\vert^{p} d\mu(\varphi) \right)^{\frac{1}{p}},
	\end{equation*}
	for all $x^*\in B_{F^{^{* }}}$ and all $x_{1},..., x_{k} \in E$. So, for $1\le p\leq q<\infty$ and using the canonical inclusions between $L_{p}$ spaces and between $\ell_{p}$ spaces, we obtain, for every $k\in \mathbb{N}$,
\begin{align*}
\left( \sum_{j=1}^{k}\vert x^*(T(x_j)) \vert ^{q}\right) ^{\frac{1 }{q}%
} & \le \left( \sum_{j=1}^{k}\vert x^*(T(x_j)) \vert ^{p}\right) ^{\frac{1 }{p}%
} \\
& \leq C \cdot \left(\int_{B_{F^{^{** }}}} \vert \varphi(x^*)
\vert^{p} d\mu(\varphi) \right)^{\frac{1}{p}}  \\
& \leq C \cdot \left(\int_{B_{F^{^{** }}}} \vert \varphi(x^*)
\vert^{q} d\mu(\varphi) \right)^{\frac{1}{q}}, 
\end{align*}
for all $x^*\in B_{F^{^{* }}}$ and all $x_{1},..., x_{k} \in E$. The conclusion follows using c) $\Rightarrow$ a) from Theorem \ref{teopietsch}.
\end{proof}

\bigskip

\noindent Departamento de Ci\^{e}ncias Exatas\newline
Universidade Federal da Para\'iba\newline
58.297-000 -- Rio Tinto -- Brazil\newline
and\newline
Departamento de Matem\'atica\newline
Universidade Federal da Para\'iba\newline
58.051-900 -- Jo\~ao Pessoa -- Brazil\newline
e-mails: jamilson@dcx.ufpb.br and jamilsonrc@gmail.com\newline

\noindent Departamento de Matem\'atica\newline
Universidade Federal da Para\'iba\newline
58.051-900 -- Jo\~ao Pessoa -- Brazil\newline
e-mail: joedson@mat.ufpb.br and joedsonmat@gmail.com

\end{document}